\newcommand{\e}{\mathrm{e}}
\newtheorem{thm}{Theorem}[section]
\newtheorem{cor}[thm]{Corollary}
\newtheorem{prop}[thm]{Proposition}
\newtheorem{lem}[thm]{Lemma}
\theoremstyle{definition}
\newtheorem{defn}[thm]{Definition}
\theoremstyle{remark}
\newtheorem{rem}[thm]{Remark}
\theoremstyle{definition}
\theoremstyle{definition}
\theoremstyle{definition}
\numberwithin{equation}{section} 
\title{On the Links-Gould invariant and the square of the Alexander polynomial}
\author{Ben-Michael Kohli} 
\address[Ben-Michael Kohli]{IMB UMR5584, CNRS, Universit\'e Bourgogne Franche-Comt\'e, F-21000 Dijon, France.}
\email{Ben-Michael.Kohli@u-bourgogne.fr}
\keywords{Link, knot, Alexander-Conway polynomial, Links-Gould invariant, R-matrix}
\subjclass[2010]{57M27 (Primary), 17B37 (Secondary)}
\begin{document}
\begin{abstract}
This paper gives a connection between well chosen reductions of the Links-Gould invariants of oriented links and powers of the Alexander-Conway polynomial. We prove these formulas by showing the representations of the braid groups we derive the specialized Links-Gould polynomials from can be seen as exterior powers of copies of Burau representations.
\end{abstract}
\maketitle
\setcounter{tocdepth}{1}
\tableofcontents
%
%
\section*{Introduction}

The Links-Gould invariants of oriented links $LG^{m,n}(L;t_0,t_1)$ are two variable polynomial quantum invariants. In \cite{DWiIsLi}, David De Wit, Atsushi Ishii and Jon Links proved the following equalities :
$$LG^{1,n}(L;t_0,e^{2i\pi/n} t_0^{-1}) = \Delta_L(t_0^n) $$
where $\Delta_L(t)$ is the Alexander-Conway invariant of $L$. So the Links-Gould invariants contain some topological information. We reinforce that statement by proving the following identity, that the authors we just cited had already conjectured and proved for particular links :
$$ LG^{n,1}(L;t_0,t_0^{-1})= \Delta_L(t_0)^n$$ 
when $n=2,3$. There is no known set of complete skein relations for the square of the Alexander polynomial, so the ideas used in \cite{DWiIsLi} cannot be transposed to our case easily. On the other hand, Ivan Marin and Emmanuel Wagner give a complete set of skein relations for $LG^{2,1}$ in \cite{MarWag}. So evaluating them and testing whether the square of the Alexander polynomial satisfies these evaluated skein relations or not is a possible strategy. However, the cubic skein relation is barely practicable, and such an approach can not be generalized to $n$ greater than $2$.

Our strategy will be to use the robustness of the braid structure to encode links. We express the Alexander-Conway polynomial as a quantum trace as it is done in \cite{Oht}, appendix C. Then we prove the R-matrix representation of braid group $B_n$ used to define reduced Links-Gould invariant $LG^{2,1}$ (resp. $LG^{3,1}$) is isomorphic to the exterior power of a direct sum of Burau representations. That way, the specialized Links-Gould invariants can be written as products of terms, each of which can be identified with the Alexander polynomial of our link seen as a quantum trace.

Our result along with the one we cited at the beginnig of this introduction can also be thought of as a counterpart to the well known result stating that the Jones polynomial and it's square can both be recovered as evaluations of the two variable Kauffman polynomial. See \cite{Lick}, Proposition 16.6, p. 180.

Let us also mention the work of Nathan Geer and Bertrand Patureau-Mirand who extended the Links-Gould invariant to a multivariable link invariant in the same fashion the multivariable Alexander polynomial arises from it's traditional counterpart \cite{GPM}. We suspect that our results remain true in some sense in that multivariable context. 

In section 1, we recall the definition of the Links-Gould invariant of oriented links, and an expression of the Alexander-Conway polynomial in terms of a partial trace. In section 2 we show that the specialized Links-Gould invariant $LG^{2,1}$ can be written as a product by proving two representations of the braid group are isomorphic. We then identify in section 3 each part of the product with the Alexander-Conway invariant. Section 4 is dedicated to extending the proof to the next Links-Gould invariant $LG^{3,1}$.
\section{Definitions and main result}

\subsection{The Alexander-Conway polynomial}

\begin{defn}

(\textit{Reduced and non-reduced Burau representations of a braid}) \\
Set $\mathbb{K} := \mathbb{C}(t^{\pm \frac{1}{2}})$. Let $W_n = <f_1, \ldots, f_n>$ be a $n$-dimensional $\mathbb{K}$-vector space, and $B_n$ be the braid group on $n$ strands. We denote by $\sigma_1, \ldots , \sigma_{n-1}$ the standard Artin generators of the group. The non-reduced Burau representation $\Psi_{W_n} : B_n \longrightarrow GL(W_n)$ is given by :

$$
\Psi_{W_n}(\sigma_i)(f_j) = \left\{
    \begin{array}{lll}
     (1-t) f_i + t^{1/2} f_{i+1} & \mbox{if } j=i ~, \\
        t^{1/2} f_i & \mbox{if }j=i+1 ~, \\
        f_j & \mbox{otherwise}.
    \end{array}
\right. 
$$\newline
Denote by $\delta_n := t^{-(n-1)/2} f_1 + t^{-(n-2)/2} f_2 + \ldots + t^{-1/2} f_{n-1} +f_n$. One can verify that for any $b \in B_n$, $\Psi_{W_n}(b)(\delta_n) = \delta_n$. Hence the reduced Burau representation $\Psi_{\widehat{W_n}} : B_n \longrightarrow GL(\widehat{W_n})$ is given by :
$$
\Psi_{\widehat{W_n}}(b)(\overline{x}) = \overline{\Psi_{W_n}(b)(x)}
$$
where $\widehat{W_n} := W_n /<\delta_n>$.
\end{defn}

Recall that the Alexander theorem states that any link can be obtained as the closure of a given braid. Moreover, the Markov theorem allows us to define link invariants through braids with closure the link. A possible definition of the classical Alexander link invariant uses that procedure.

\begin{defn}

(\textit{Alexander polynomial of a link through the Burau representation}) \\
The Alexander polynomial of an oriented link $L$ is defined as :
$$
\Delta_L(t) \overset{\bullet}{=} \frac{1-t}{1-t^n} ~ det(I-\Psi_{\widehat{W_n}}(b))
$$
where $b$ is any braid in $B_n$ with closure $L$, and the notation $\overset{\bullet}{=}$ means equality up to multiplication by a unit of $\mathbb{C}[t^{\pm 1}]$. 

\end{defn}

However, it is not this definition of the Alexander polynomial that will be useful to us in the following. Next theorem gives another expression, that will be the one we will consider. In particular, this definition removes the ambiguity that relied in the multiplication by a unit.

\begin{defn}
Let $V$ be a $2$-dimensional $\mathbb{K}$-vector space, and $(e_0, e_1)$ be a basis of $V$. We define a representation $\Psi_{V^{\otimes n}} : B_n \longrightarrow GL(V^{\otimes n})$ of $B_n$ :

$$ \Psi_{V^{\otimes{n}}}(\sigma_i) = id_{V}^{\otimes{i-1}} \otimes R_1 \otimes id_{V}^{\otimes{n-i-1}} $$ 
where $$R_1 = \begin{pmatrix}
   1 & 0 & 0 & 0 \\
   0 & 0 & t^{1/2} & 0 \\
   0 & t^{1/2} & 1-t & 0 \\
   0 & 0 & 0 & -t
\end{pmatrix}~ \in End(V \otimes V)  $$ 
\newline
is an R-matrix, that is a solution of the Yang-Baxter equation.
\end{defn}

\begin{thm} \label{defalexander}
Let $L$ be an oriented link and $b \in B_n$ be any braid with closure $L$. We define
$$
h = \begin{pmatrix}
   t^{1/2} & 0 \\
   0 & -t^{1/2}
\end{pmatrix} \in {End}(V).
$$
Then : \newline
1) $\exists$ $c \in \mathbb{K}$ such that $trace_{2,3,\ldots,n}((id_{V} \otimes h^{\otimes n-1}) \circ \Psi_{V^{\otimes{n}}}(b))= c.id_V$,\newline
2) $c$ is a link invariant and is equal to the Alexander polynomial of $L$, $\Delta_L(t)$.

\end{thm}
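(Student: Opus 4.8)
The plan is to recognize the R-matrix $R_1$ and the "enhancement" $h$ as the standard quantum-$\mathfrak{sl}_2$ data that realizes the Alexander polynomial as the Reshetikhin–Turaev invariant coming from the $(1,1)$-tangle (or partial-trace) construction, exactly as in Ohtsuki's book, Appendix C. Concretely, I would first check that $R_1$ satisfies the Yang–Baxter equation and that the pair $(R_1, h)$ is an \emph{enhanced Yang–Baxter operator}: that $R_1$ commutes with $h\otimes h$, and that the partial-trace normalization $\mathrm{trace}_2((\mathrm{id}_V\otimes h)\circ R_1^{\pm 1}) = \mathrm{id}_V$ holds (this last identity is what forces the Markov-II move to go through and is a short $2\times 2$ computation). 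Granting this, the general Reshetikhin–Turaev/Turaev machinery for enhanced Yang–Baxter operators immediately yields part 1): for any braid $b\in B_n$, the operator $\mathrm{trace}_{2,\ldots,n}((\mathrm{id}_V\otimes h^{\otimes n-1})\circ \Psi_{V^{\otimes n}}(b))$ commutes with the whole image of the representation restricted to the first strand, hence — because $V$ is an irreducible $2$-dimensional module and by a Schur-type argument together with the fact that $b$ can be taken to fix strand $1$ up to conjugation, or more simply because the construction is manifestly a scalar on the $(1,1)$-tangle closure — it is a scalar multiple $c\cdot \mathrm{id}_V$.

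For part 2), there are two things to establish: that $c$ is a link invariant, and that it equals $\Delta_L(t)$. Invariance under the two Markov moves is the standard argument: invariance under conjugation $b\mapsto aba^{-1}$ in $B_n$ follows from the trace property together with the fact that $h^{\otimes n-1}$ (suitably placed) commutes with the relevant operators; invariance under stabilization $b\mapsto b\sigma_n^{\pm 1}\in B_{n+1}$ follows precisely from the partial-trace normalization $\mathrm{trace}_2((\mathrm{id}\otimes h)\circ R_1^{\pm 1})=\mathrm{id}_V$ verified above. To identify the scalar with the Alexander polynomial, I would compare with the Burau definition given in Definition 1.3: a direct route is to exhibit an explicit $B_n$-equivariant identification relating the RT-invariant built from $(R_1,h)$ to the Burau/Alexander data — for instance, by computing $c$ on the generators $\sigma_1^{\pm 1}\in B_2$ (whose closures are the unknot and Hopf links or can be reduced to the unknot) to pin down the normalization, and then invoking that both $c$ and $\Delta_L$ are multiplicative/satisfy the same Conway skein relation. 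Indeed $R_1 - R_1^{-1}$ is a multiple of a "smoothing" operator, so $(R_1,h)$ yields an invariant satisfying the Alexander–Conway skein relation with the correct normalization on the unknot, and the Conway skein relation together with the value on the unknot determines $\Delta_L(t)$ uniquely.

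The main obstacle I anticipate is not any single computation but rather assembling the normalization bookkeeping cleanly: one must make sure the placement of the $h$-factors (acting on strands $2,\ldots,n$ but not on strand $1$) interacts correctly with both Markov moves, since the asymmetry between strand $1$ and the rest is exactly what makes this a $(1,1)$-tangle invariant rather than a plat or ordinary closure. In particular the verification that $c$ does not depend on the choice of braid $b$ (as opposed to merely being well-defined for a fixed $b$) requires care in tracking how $h^{\otimes n-1}$ transforms under stabilization from $B_n$ to $B_{n+1}$. Since this entire theorem is essentially a restatement of the content of \cite{Oht}, Appendix~C, I would present the proof as a verification that $R_1$ and $h$ meet the hypotheses there, and refer to that source for the general invariance mechanism, spelling out only the two small matrix identities ($R_1$ solves Yang–Baxter; the partial trace of $(\mathrm{id}\otimes h)R_1^{\pm1}$ is the identity) and the skein-relation computation that pins the answer down to $\Delta_L(t)$.
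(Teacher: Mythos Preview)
Your proposal is correct and matches the paper's treatment: the paper does not give an independent proof of this theorem but simply writes ``For a detailed proof, see \cite{Oht}, appendix C,'' which is exactly the source you propose to invoke after checking that $(R_1,h)$ is an enhanced Yang--Baxter operator. Your outline in fact spells out more of the verification (Yang--Baxter, the normalization $\mathrm{trace}_2((\mathrm{id}\otimes h)R_1^{\pm1})=\mathrm{id}_V$, Markov invariance, and the skein identification) than the paper does, but the approach is the same.
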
 

For a detailed proof, see \cite{Oht}, appendix C.

\begin{rem}
R-matrix $R_1$ can be recovered from the universal R-matrix of ribbon Hopf algebra $U_{\zeta}(sl_2)$ at root of unity $\zeta = -1$ thanks to a one parameter family of irreducible representations of $U_{\zeta}(sl_2)$ on $V$. For precise explanations, see \cite{Oht}, p.95-97, or \cite{Viro}. It may also be derived from the quantized universal enveloping algebra of $gl(1|1)$, that is $U_q(gl(1|1))$. See \cite{Viro} or \cite{Resh} for details.

\end{rem}

\begin{rem}
Identifying algebras $End(V^{\otimes n})$ and $End(V)^{\otimes n}$, the partial trace operator verifies $trace_{2,3,\ldots,n}(f_1 \otimes \ldots \otimes f_n) := trace(f_2) trace(f_3) \ldots trace(f_n) f_1 \in End(V)$ for any $f_1, \ldots, f_n \in End(V)$.

\end{rem}

\begin{cor} \label{formuletrace}
With the same notations, this formula follows from theorem \ref{defalexander} :
$$
\Delta_L(t) = \frac{1}{2}~trace((id_{V} \otimes h^{\otimes n-1}) \circ \Psi_{V^{\otimes{n}}}(b)).
$$ 

\end{cor}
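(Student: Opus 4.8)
The plan is to take the ordinary trace on $End(V)$ of both sides of the identity in part 1) of Theorem \ref{defalexander}. Write $X := (id_{V} \otimes h^{\otimes n-1}) \circ \Psi_{V^{\otimes{n}}}(b) \in End(V^{\otimes n})$. By part 1) there is $c \in \mathbb{K}$ with $trace_{2,3,\ldots,n}(X) = c \cdot id_V$, and by part 2) we have $c = \Delta_L(t)$. Applying $trace$ to this equality, and using that $V$ is $2$-dimensional so that $trace(id_V) = 2$, the right-hand side becomes $trace(c \cdot id_V) = 2c = 2\,\Delta_L(t)$.

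For the left-hand side I would invoke the compatibility of the ordinary trace with the partial trace, namely $trace(Y) = trace\big(trace_{2,3,\ldots,n}(Y)\big)$ for every $Y \in End(V^{\otimes n})$. By linearity it is enough to check this on pure tensors $f_1 \otimes \cdots \otimes f_n$ with $f_i \in End(V)$ under the identification $End(V^{\otimes n}) \cong End(V)^{\otimes n}$: by the Remark preceding the corollary, $trace_{2,\ldots,n}(f_1 \otimes \cdots \otimes f_n) = trace(f_2) \cdots trace(f_n)\, f_1$, hence $trace\big(trace_{2,\ldots,n}(f_1 \otimes \cdots \otimes f_n)\big) = trace(f_1) trace(f_2) \cdots trace(f_n) = trace(f_1 \otimes \cdots \otimes f_n)$, the last step being the multiplicativity of the trace on a tensor product of endomorphism algebras.

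Combining the two computations gives $trace(X) = 2\,\Delta_L(t)$, i.e. $\Delta_L(t) = \tfrac12\, trace(X)$, which is exactly the claimed formula. There is essentially no real obstacle here: the only point requiring a little care is the bookkeeping behind the identification $End(V^{\otimes n}) \cong End(V)^{\otimes n}$ and the fact that "taking the remaining trace of a partial trace recovers the full trace", both of which are immediate once the Remark is available. The statement is thus a direct corollary of Theorem \ref{defalexander}.
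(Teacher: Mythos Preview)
Your proof is correct and follows exactly the same approach as the paper: apply the full trace to the identity $trace_{2,\ldots,n}(X)=c\cdot id_V$, use $\dim V=2$ to get $2c$ on the right, and verify on pure tensors (then extend by linearity) that $trace\circ trace_{2,\ldots,n}=trace$ on $End(V)^{\otimes n}\cong End(V^{\otimes n})$.
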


\begin{proof}
Applying the trace operator on each side of the formula that defines constant $c$, we obtain :
$$
2c = trace \large( trace_{2,3,\ldots,n}((id_{V} \otimes h^{\otimes n-1}) \circ \Psi_{V^{\otimes{n}}}(b)) \large).
$$
But : $$trace \large( trace_{2,3,\ldots,n}(f_1 \otimes \ldots \otimes f_n) \large) = trace \large( trace(f_2) trace(f_3) \ldots trace(f_n) f_1 \large)$$ 
$$= trace(f_2) trace(f_3) \ldots trace(f_n) trace(f_1) = trace \large( f_1 \otimes \ldots \otimes f_n \large)$$ 
Since the trace and the partial trace are linear maps, we can extend the equality to any $f \in End(V^{\otimes n})\simeq End(V)^{\otimes n}$, which provides the result.
\end{proof}

\subsection{The Links-Gould invariant $LG^{2,1}$ of links} \label{defLG}

\begin{defn}
Set $\mathbb{L} := \mathbb{C}(t_0^{\pm \frac{1}{2}},t_1^{\pm \frac{1}{2}})$. Let $W = <e_1,\ldots, e_4>$ be a four-dimensional $\mathbb{L}$-vector space. The following linear map $R$, expressed in basis $(e_1 \otimes e_1 , e_1 \otimes e_2, e_1 \otimes e_3, e_1 \otimes e_4, e_2 \otimes e_1 , e_2 \otimes e_2, e_2 \otimes e_3, \ldots)$, is an automorphism of $W \otimes W$ and an R-matrix \cite{DeWKauLin}, p.186 :
\setcounter{MaxMatrixCols}{18}

\begin{center}

\scalebox{0.75}{
$
\begin{pmatrix}
   {-t_0} & . & .  &  . & .  & .  & .  & .  & .  & .  & .  & .  & .  & .  & .  & .  \\
   . & . & .  & .  & -t_0^{1/2}  & .  & .  & .  & .  & .  & .  & .  & .  & .  & .  & .  \\
   . & . & .  & .  & .  & .  & .  & .  & -t_0^{1/2}  & .  & .  & .  & .  & .  & .  & .  \\
   . & . & .  & .  & .  & .  & .  & .  & .  & .  & .  & .  & -1  & .  & .  & .  \\
   . & -t_0^{1/2} & .  & .  & 1-t_0  & .  & .  & .  & .  & .  & .  & .  & .  & .  & .  & .  \\
   . & . & .  & .  & .  & 1  & .  & .  & .  & .  & .  & .  & .  & .  & .  & .  \\
   . & . & .  & .  & .  & .  & 1-t_0 t_1  & .  & .  & t_0^{1/2} t_1^{1/2}  & .  & .  & t_0^{1/2} t_1^{1/2} Y  & .  & .  & .  \\
   . & . & .  & .  & .  & .  & .  & .  & .  & .  & .  & .  & .  &  -t_1^{1/2} & .  & .  \\
   . & . & -t_0^{1/2}  & .  & .  & .  & .  & .  & 1-t_0  & .  & .  & .  & .  & .  & .  & .  \\
   . & .  & .  & .  & .  & .  & t_0^{1/2} t_1^{1/2}  & .  & .  & .  & .  & .  & -Y  & .  & .  & .  \\
   . & . & .  & .  & .  & .  & .  & .  & .  & .  & 1  & .  & .  & .  & .  & .  \\
   . & . & .  & .  & .  & .  & .  & .  & .  & .  & .  & .  & .  & .  &  -t_1^{1/2} & .  \\
   . & . & .  & -1  & .  & .  &  t_0^{1/2} t_1^{1/2} Y & .  & .  & -Y  & .  & .  & -Y^{2}  & .  & .  & .  \\
   . & . &  . & .  & .  & .  & .  & -t_1^{1/2}  & .  & .  & .  & .  & .  & 1-t_1  & .  & .  \\
   . & . & .  & .  & .  & .  & .  & .  & .  & .  & .  & -t_1^{1/2}  & .  & .  & 1-t_1  & .   \\
   . & . & .  & .  & .  & .  & .  & .  & .  & .  & .  & .  & .  & .  & .  &  -t_1
\end{pmatrix}
$
}
\end{center}

\noindent where $Y = ((t_0 - 1)(1- t_1))^{1/2}$. \\
We denote by $b_R^n$ the representation of braid group $B_n$ derived from this R-matrix. It is given by the standard formula :
$$
b_R^n(\sigma_i) = id_{W}^{\otimes{i-1}} \otimes R \otimes id_{W}^{\otimes{n-i-1}} \text{ , } i = 1, \ldots, n-1 \text{.}
$$

\end{defn}

\begin{thm}
Let $L$ be an oriented link, and $b \in B_n$ a braid with closure $L$. Define $\mu$ the following linear map :
$$
\mu = \begin{pmatrix}
   t_0^{-1} & . & . & . \\
   . & -t_1 & . & . \\
   . & . & -t_0^{-1} & . \\
   . & . & . & t_1
\end{pmatrix}
\in End(W).
$$
Then : \newline
1) $\exists$ $c \in \mathbb{L}$ such that $trace_{2,3,\ldots,n}((id_{W} \otimes \mu^{\otimes n-1}) \circ b_R^n(b))= c.id_W$,\newline
2) $c$ is an oriented link invariant called Links-Gould invariant of $L$. We will denote it by $LG^{2,1}(L ; t_0 , t_1)$, or simply $LG(L ; t_0 , t_1)$ when it is not ambiguous to do so.
\end{thm}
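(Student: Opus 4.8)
The plan is to read this as the standard Reshetikhin--Turaev construction of a quantum invariant from the enhanced R-matrix $(R,\mu)$, so that the whole statement reduces to extending the braid representations $b_R^n$ to a functor on tangles and then invoking Alexander's and Markov's theorems. Beyond the Yang--Baxter equation and the invertibility of $R$ (both already recorded), the only facts one needs about $R$ and $\mu$ are: (a) $(\mu\otimes\mu)\,R=R\,(\mu\otimes\mu)$ in $End(W\otimes W)$; and (b) $trace_2\big((id_W\otimes\mu)\,R^{\pm1}\big)=id_W$, where $trace_2$ is the partial trace over the second factor. Identity (a) expresses that $\mu$ is compatible with the braiding; it is immediate from the fact that $\mu$ is grouplike for the quantized enveloping superalgebra $U_q(\mathfrak{gl}(2|1))$ underlying the Links--Gould invariant (see \cite{DWiIsLi,DeWKauLin})---$\mu$ acts by a scalar on each weight space and $R$ preserves total weight, so $\mu_i\mu_j=\mu_k\mu_\ell$ for every nonzero entry $R^{k\ell}_{ij}$. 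Identity (b) is the Markov/framing normalization of $R$ (curls act trivially) and is a finite entrywise check, using $R$ in the positive case and, in the negative case, $R^{-1}$, which is a quadratic polynomial in $R$ because $R$ satisfies a cubic equation (the origin of the cubic skein relation of \cite{MarWag}). Together, (a) and (b) are exactly what is required to promote $b\mapsto b_R^n(b)$ to a functor $F$ from oriented tangles to $U_q(\mathfrak{gl}(2|1))$-modules sending a positive (resp.\ negative) crossing to $R$ (resp.\ $R^{-1}$) and the cups and caps to the (co)evaluation maps twisted by $\mu$.

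The key observation tying everything together is that
\[
T_b:=trace_{2,\ldots,n}\big((id_W\otimes\mu^{\otimes n-1})\circ b_R^n(b)\big)\ =\ F(D_b),
\]
where $D_b$ is the $(1,1)$-tangle obtained from the closure $\widehat b$ of $b$ by cutting the first strand---indeed, closing the $i$-th strand in the functor $F$ amounts, by the definition of the twisted (co)evaluations, precisely to the partial trace over the $i$-th factor with $\mu$ inserted there. Part 1) is then immediate: $F(D_b)$ is a morphism of $U_q(\mathfrak{gl}(2|1))$-modules $W\to W$, and since the four-dimensional module $W$ carrying $R$ is irreducible (it is a typical representation), Schur's lemma forces $T_b=c\cdot id_W$ for some $c\in\mathbb{L}$. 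For part 2), by the Alexander theorem every oriented link is a braid closure, and by the Markov theorem any two braids with closure $L$ are connected by a finite chain of conjugations $b\mapsto\sigma_i^{\pm1}b\,\sigma_i^{\mp1}$ in $B_n$ and stabilizations $b\mapsto b\,\sigma_n^{\pm1}$, $b\in B_n\hookrightarrow B_{n+1}$. Each of these moves is an isotopy of the associated $(1,1)$-tangle: for a conjugation one slides the conjugating crossing at one end around the closure arc of strand $i$ or $i+1$ until it meets and annihilates its inverse at the other end; for a stabilization the extra crossing together with the closure of the new strand forms a curl which $F$ collapses to the identity by identity (b). As $F$ is an isotopy invariant of tangles, $T_b=F(D_b)$ depends only on $L$; combined with part 1) this says $c$ is a well-defined element of $\mathbb{L}$ attached to $L$, and we call it $LG^{2,1}(L;t_0,t_1)$.

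The one genuinely non-formal ingredient is the representation theory used in part 1)---the irreducibility of $W$ as a $U_q(\mathfrak{gl}(2|1))$-module, which is what makes the module endomorphism algebra $End_{U_q(\mathfrak{gl}(2|1))}(W)$ one-dimensional and hence forces $T_b$ to be a scalar. I would expect this to be the only real obstacle, since everything else is either a standard consequence of the Reshetikhin--Turaev machinery or the routine verification of (a) and (b) for the explicit $16\times16$ matrix above; all of it is contained in \cite{DWiIsLi,DeWKauLin}. It is worth stressing why one cannot shortcut part 1) by closing every strand and taking an ordinary quantum trace: here $trace(\mu)=0$, so such a trace vanishes identically, and the invariant genuinely lives on the once-punctured closure $D_b$ rather than on the full closure $\widehat b$.
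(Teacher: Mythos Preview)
The paper does not prove this theorem; it is stated as a known result, with the surrounding remarks pointing to \cite{DeWKauLin} (and \cite{DeWit}) for its origin. Your proposal correctly reconstructs the standard Reshetikhin--Turaev argument that underlies those references: verify that $(R,\mu)$ is an enhanced Yang--Baxter operator via the commutation $(\mu\otimes\mu)R=R(\mu\otimes\mu)$ and the curl identities $trace_2((id_W\otimes\mu)R^{\pm1})=id_W$, interpret the partial trace as the functorial value on the $(1,1)$-tangle obtained by cutting one strand of the closure, invoke irreducibility of $W$ and Schur's lemma for part 1), and Markov's theorem for part 2). Your closing remark that $trace(\mu)=0$ forces one to work with the cut-open closure rather than the full quantum trace is exactly the point, and is consistent with how the paper uses the invariant (via the formula $LG=\tfrac14\,trace((id_W\otimes\mu^{\otimes n-1})\circ b_R^n(b))$ in the remark following the theorem). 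So there is nothing to compare against in the paper itself; your sketch is sound and matches the literature the paper defers to.
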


\begin{rem}
With the notations used in \cite{DeWKauLin}, $LG(L ; q^{-2 \alpha} , q^{2 \alpha +2})$ is the Links-Gould invariant introduced in that paper, using a one parameter family of representations of quantum superalgebra $U_q(gl(2|1))$.
\end{rem}

\begin{rem}
As in corollary \ref{formuletrace}, we explicit a formula for $LG$, that will be useful to us : $$LG(L ; t_0 , t_1)=\frac{1}{4}~trace((id_{W} \otimes \mu^{\otimes n-1}) \circ b_R^n(b)).$$
\end{rem}

\subsection{The conjecture}
The Links-Gould polynomial we just defined is a particular case of a larger family of Links-Gould invariants, introduced by David De Wit in \cite{DeWit}. We will write $LG^{m,n}$, where $m$, $n$ are positive integers. Each invariant is associated with a highest weight $U_q(gl(m|n))$ representation. The invariant we explicited corresponds to case $(2,1)$. In \cite{DWiIsLi}, D. De Wit, A. Ishii and J. Links conjectured that, in their set of variables, well chosen reductions  of $LG^{m,n}$ recover powers of the Alexander-Conway polynomial :
$$
LG^{m,n}( L ; \tau , e^{i \pi / n}) = \Delta_L(\tau^{2 n})^{m}.
$$
In the same paper, they prove the conjecture in cases $(1,n)$, as well as in case $(2,1)$ for a certain class of braids, using representation theory of $U_q(gl(n|1))$. Using a new strategy, we prove the conjecture completely in cases $(2,1)$ and $(3,1)$. We believe that the method can be generalized to cases $(n,1)$ after extended and extensive computation. \\
\\
In the next two sections, we prove case $(2,1)$. We express the conjecture in the set of variables we used to introduce $LG^{2,1}$. We want to prove :
$$
LG^{2,1}(L;\tau,-1) = \Delta_{L}(\tau^2)^2.
$$
Variables $(\tau, q)$ and $(t_0,t_1)$ are related by : $t_1^{1/2} = \tau^{-1}q$, $ t_0^{1/2}=\tau$ (and therefore $t_0^{1/2}t_1^{1/2} = q$). Since in our case $q=-1$, we obtain :
$$
t_0^{1/2}t_1^{1/2} = -1 \text{, } \tau^2 = t_0 = t_1^{-1}.
$$
Thus, once it is formulated in a convenient way, our main result states :
\begin{thm} \label{main}
For any oriented link L, $LG^{2,1}(L; t_0, t_0^{-1}) \overset{\bullet}{=} \Delta_L(t_0)^2$.
\end{thm}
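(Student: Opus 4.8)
The plan is to prove that, at the specialization $t_1=t_0^{-1}$ (equivalently $t_0^{1/2}t_1^{1/2}=-1$), the representation $b_R^n$ underlying $LG^{2,1}$ is isomorphic, as a representation of $B_n$, to the tensor square $\Psi_{V^{\otimes n}}\otimes\Psi_{V^{\otimes n}}$ of the representation that computes the Alexander polynomial in Corollary~\ref{formuletrace}, in such a way that the weight operator $\mu$ corresponds to $h\otimes h$ up to an invertible scalar. Granting this, reshuffling tensor factors gives $W^{\otimes n}\cong(V\otimes V)^{\otimes n}\cong V^{\otimes n}\otimes V^{\otimes n}$, and multiplicativity of the trace turns the specialized $\tfrac{1}{4}\,trace\big((id_W\otimes\mu^{\otimes n-1})\circ b_R^n(b)\big)$ into $\bigl[\tfrac{1}{2}\,trace\big((id_V\otimes h^{\otimes n-1})\circ\Psi_{V^{\otimes n}}(b)\big)\bigr]^2$ up to a unit; Corollary~\ref{formuletrace} identifies the bracket with $\Delta_L(t_0)$, yielding $LG^{2,1}(L;t_0,t_0^{-1})\overset{\bullet}{=}\Delta_L(t_0)^2$. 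The unit absorbs the scalar between $\mu$ and $h\otimes h$, the powers of $t_0$ coming from the reshuffle, the factors $\tfrac{1}{4}$ and $\tfrac{1}{2}$, and (should one prefer to compute with $t_0^{-1}$) the symmetry $\Delta_L(t^{-1})\overset{\bullet}{=}\Delta_L(t)$.

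The isomorphism is built from one change of basis $\phi\colon W\xrightarrow{\sim}V\otimes V$ (both spaces are $4$-dimensional; it is suggestive, though not logically needed, to read $V$ as $\mathbb{C}^{1|1}$ and $V\otimes V\cong\mathbb{C}^{2|2}\cong W$, the $-t$ in $R_1$ being the super-flip). Asking $\phi$ to carry $\mu$ to a multiple of $h\otimes h$ already forces it to send the $\mu$-eigenspaces $\langle e_1,e_4\rangle$ and $\langle e_2,e_3\rangle$ to the $h\otimes h$-eigenspaces $\langle e_0\otimes e_0,\,e_1\otimes e_1\rangle$ and $\langle e_0\otimes e_1,\,e_1\otimes e_0\rangle$; the remaining freedom (two $2\times2$ blocks) is pinned down by the decisive demand that, on $W\otimes W\cong V^{\otimes4}$, the specialized $R$ satisfy $(\phi\otimes\phi)\,R\,(\phi\otimes\phi)^{-1}=(R_1)_{13}\,(R_1)_{24}$ --- $R_1$ acting on the first and third $V$-slots, composed with $R_1$ acting on the second and fourth. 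Concretely one writes down $\phi$ and verifies this $16\times16$ matrix identity. The specialization is what makes it possible: there $Y=((t_0-1)(1-t_1))^{1/2}$ collapses to $t_0^{1/2}-t_0^{-1/2}$ and enough entries of $R$ merge for the factorization to exist, whereas for general $q$ it does not --- which is precisely why the full conjecture resists this method. Finally, since one checks that $\Psi_{V^{\otimes n}}$ is itself isomorphic to the action of $B_n$ on the exterior algebra $\Lambda^{\bullet}(W_n)$ of the (non-reduced) Burau module --- already visible at $n=2$ by reordering the basis in which $R_1$ is written --- the outcome displays $b_R^n$ as $\Lambda^{\bullet}(W_n\oplus W_n)$, the exterior power of a direct sum of two Burau representations, matching the description in the introduction; Section~3 then ``identifies each factor with the Alexander polynomial'' through Corollary~\ref{formuletrace}.

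The main obstacle is the R-matrix factorization of the previous paragraph: producing the correct $\phi$ and checking the $16\times16$ identity at $q=-1$, with the signs handled carefully enough that $(R_1)_{13}(R_1)_{24}$ really does reorganize, under the reshuffle $(V\otimes V)^{\otimes n}\cong V^{\otimes n}\otimes V^{\otimes n}$, into $\Psi_{V^{\otimes n}}(\sigma_i)\otimes\Psi_{V^{\otimes n}}(\sigma_i)$ for each $i$ (this last step is then automatic, as both $b_R^n(\sigma_i)$ and $\Psi_{V^{\otimes n}}(\sigma_i)$ act only through the $i$-th and $(i{+}1)$-st slots). Everything else is bookkeeping that only has to be consistent up to a unit, exactly what $\overset{\bullet}{=}$ permits. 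The same scheme, run in Section~4 with $V\otimes V$ replaced by $V\otimes V\otimes V$ (equivalently $\Lambda^{\bullet}(W_n\oplus W_n\oplus W_n)$) and the specialization $q=e^{i\pi/3}$, gives the $LG^{3,1}$ case; the only extra difficulty there is the size and delicacy of the corresponding R-matrix computation.
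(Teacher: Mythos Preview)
Your strategy is the paper's strategy, only repackaged: the paper shows $b_R^n(t_0,t_0^{-1})\cong\bigwedge(F\oplus G)$ with $F,G$ two Burau-type modules, and since $\bigwedge(F\oplus G)\cong\bigwedge F\otimes\bigwedge G$ and (Ohtsuki) each $\bigwedge W_n\cong V^{\otimes n}$, this is exactly your tensor-square picture. Your local formulation---check a single identity on $W^{\otimes2}\cong V^{\otimes4}$ and let the $n$-strand case follow from the shuffle $(V\otimes V)^{\otimes n}\cong V^{\otimes n}\otimes V^{\otimes n}$---is neater than the paper's explicit induction on $n$ (the appendix), which in effect repeats the $n=2$ check with bookkeeping of reordering signs. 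So the packaging buys a cleaner induction, but the substantive computation is the same $16\times16$ verification you flag as the main obstacle.

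There is, however, a concrete error in your factorization ansatz. The specialized $R$ cannot be conjugate to $(R_1)_{13}(R_1)_{24}$ with the \emph{same} $R_1$ on both pairs of slots: the eigenvalues of $R_1$ are $1,1,-t,-t$, so $R_1\otimes R_1$ has spectrum $\{1^{(4)},(-t)^{(8)},t^{2\,(4)}\}$, whereas the specialized $R$ has spectrum $\{1^{(8)},(-t_0)^{(4)},(-t_0^{-1})^{(4)}\}$ (read it off the block form in the paper; the $4\times4$ block contributes $1,1,-t_0,-t_0^{-1}$). The multiplicities already disagree. What does work---and is what the paper actually proves---is $(R_a)_{13}(R_b)_{24}$ with one factor at $t_0$ and the other at $t_0^{-1}$ (up to scalars), consistent with the paper's two Burau modules $F$ (parameter $t_0^{-1}$) and $G$ (parameter $t_0$). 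Your parenthetical about the symmetry $\Delta_L(t^{-1})\overset{\bullet}{=}\Delta_L(t)$ then salvages the conclusion, but you should state the factorization correctly from the start; the matching of $\mu$ with the weight operators must likewise be done with the two distinct pivotal elements, not a single $h\otimes h$.

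One small slip at the end: for $LG^{3,1}$ the relevant specialization is still $q=-1$ (the root of unity in the conjecture is $e^{i\pi/n}$ with $n$ the second superscript, here $n=1$), not $q=e^{i\pi/3}$; and the three Burau copies the paper uses all carry the same parameter, so there the factorization really is symmetric.
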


\begin{rem}
Since $LG^{2,1}$ is symmetric in $t_0$ and $t_1$ \cite{DeWKauLin, DeWit}, $LG^{2,1}(t_0,t_0^{-1})$ is symmetric in $t_0$ and $t_0^{-1}$. So if we chose $\Delta_L$ to be the Conway symmetric version of the Alexander polynomial, we are sure the equality is only up to a sign $\pm 1$. In particular, when $L$ is a knot, Ishii shows in \cite{Ish} that $LG^{2,1}(t,1) = LG^{2,1}(1,t) = 1$. So $LG^{2,1}(1,1) = 1 \geqslant 0$. Since $\Delta_L(1)^2 \geqslant 0$, we see that in this case the equality holds. 
\end{rem}

%
%
\section{The reduced Links-Gould invariant expressed as a product}

We derive a representation of the braid group $B_n$ from the Burau representation. We identify it with a specialization of the R-matrix representation given in subsection \ref{defLG}. Then we use this identification to express the specialized Links-Gould invariant as a product.

\subsection{A representation of $B_n$ isomorphic to $b_R^n(t_0,t_0^{-1})$}

Denote by $F$ the following Burau representation of $B_n$ on vector space $W_n = <f_1, \ldots, f_n>$ where we replace $t_0$ by $t_0^{-1}$ :
$$
F(\sigma_i)(f_j) = \left\{
    \begin{array}{lll}
     (1-t_0^{-1}) f_i + t_0^{-1/2} f_{i+1} & \mbox{if } j=i ~, \\
        t_0^{-1/2} f_i & \mbox{if }j=i+1 ~, \\
        f_j & \mbox{otherwise}.
    \end{array}
\right.
$$
In a similar way, let $G$ be the representation of $B_n$ on $n$-dimensional vector space $W_n = <g_1, \ldots, g_n>$ given by :
$$
G(\sigma_i)(g_j) = \left\{
    \begin{array}{lll}
     -t_0^{1/2} g_{i+1} & \mbox{if } j=i ~, \\
        -t_0^{1/2} g_i + (1-t_0) g_{i+1} & \mbox{if }j=i+1 ~, \\
        g_j & \mbox{otherwise}.
    \end{array}
\right.
$$

\begin{prop}
Representation $G$ is isomorphic to the Burau representation of $B_n$.
\end{prop}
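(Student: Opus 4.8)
The plan is to write down an explicit change of basis that conjugates $G$ into the standard (non-reduced) Burau representation $\Psi_{W_n}$ of Definition~1.1 with parameter $t_0$. Looking at the formulas, $G(\sigma_i)$ sends $g_i \mapsto -t_0^{1/2} g_{i+1}$ and $g_{i+1}\mapsto -t_0^{1/2} g_i + (1-t_0)g_{i+1}$, while $\Psi_{W_n}(\sigma_i)$ (in variable $t_0$) sends $f_i \mapsto (1-t_0)f_i + t_0^{1/2} f_{i+1}$ and $f_{i+1}\mapsto t_0^{1/2} f_i$. So $G$ looks like Burau with the roles of the two affected basis vectors swapped and an extra sign; this strongly suggests a basis change of the form $f_j \mapsto \lambda_j \, g_{n+1-j}$ for suitable scalars $\lambda_j \in \mathbb{K}$, i.e. reversing the order of the basis and rescaling. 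The first step is therefore to guess the exponents: try $h_j := (-1)^{j} t_0^{?} \, g_{n+1-j}$ (or a similar ansatz) and read off what powers of $t_0^{1/2}$ are forced.

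Concretely, I would define a linear isomorphism $\varphi : W_n \to W_n$ by $\varphi(f_j) = \epsilon_j \, g_{n+1-j}$ for scalars $\epsilon_j$ to be determined, and then compute $\varphi \circ \Psi_{W_n}(\sigma_i) \circ \varphi^{-1}$ on each basis vector $g_k$. Note that $\Psi_{W_n}(\sigma_i)$ acting on the reversed basis is really $\Psi_{W_n}(\sigma_i)$ with index $i$ replaced by $n-i$; but by the symmetry of the braid group (the Artin generators $\sigma_i$ and $\sigma_{n-i}$ play symmetric roles under the flip of the strand indices, which is itself an automorphism of $B_n$ realized by a permutation matrix) it suffices to match the $2\times 2$ blocks. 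Matching $g_i \mapsto -t_0^{1/2} g_{i+1}$ and $g_{i+1}\mapsto -t_0^{1/2}g_i + (1-t_0)g_{i+1}$ against the conjugated Burau block will pin down the ratios $\epsilon_{j+1}/\epsilon_j$ and the sign pattern; then one checks the "otherwise" entries are automatically fixed. Finally, verify $\varphi$ is invertible (all $\epsilon_j \neq 0$) and that the identification holds for every generator $\sigma_i$, hence for all of $B_n$.

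The only genuine subtlety — the part I'd be most careful about — is getting the sign conventions and the half-integer powers of $t_0$ exactly right, and correctly handling the reversal of the strand index (one should either invoke the strand-reversal automorphism of $B_n$, which preserves closures of braids and hence is legitimate here, or, more safely, just exhibit $\varphi$ as a genuine permutation-and-scaling matrix and check directly that $\varphi$ intertwines $G(\sigma_i)$ with $\Psi_{W_n}(\sigma_i)$ for each $i$ without reindexing, by choosing $\varphi(g_j) = \epsilon_j f_j$ up to the flip). There is no deep obstacle: the statement is essentially the observation that $G$ is the "transpose-dual" incarnation of Burau, and once the right intertwiner is written down the verification is a short computation on the three cases $j=i$, $j=i+1$, $j\notin\{i,i+1\}$ of the defining formulas. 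I would present the intertwiner explicitly, state that the braid relations need not be re-checked since both $G$ and $\Psi_{W_n}$ are already known to be representations, and conclude.
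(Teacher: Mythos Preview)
Your ansatz $\varphi(f_j)=\epsilon_j\,g_{n+1-j}$ does work, but what it actually proves is $G(\sigma_i)=\varphi^{-1}\,\Psi_{W_n}(\sigma_{n-i})\,\varphi$, i.e.\ $G\cong\Psi_{W_n}\circ\mathrm{rev}$ where $\mathrm{rev}:\sigma_i\mapsto\sigma_{n-i}$. The step where you pass from this to $G\cong\Psi_{W_n}$ is the gap: you assert that the strand-reversal automorphism of $B_n$ is ``realized by a permutation matrix'' in the Burau representation, but this is false for $n\geq 3$. For instance in $B_3$, any matrix $P$ with $P\,\Psi_{W_3}(\sigma_1)\,P^{-1}=\Psi_{W_3}(\sigma_2)$ and $P\,\Psi_{W_3}(\sigma_2)\,P^{-1}=\Psi_{W_3}(\sigma_1)$ cannot be a (signed) permutation: tracking where the diagonal entry $1-t$ lands already rules out every permutation. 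Your ``safer'' fallback $\varphi(g_j)=\epsilon_j f_j$ fails even more directly, since $\Psi_{W_n}(\sigma_i)(f_i)$ has a nonzero $f_i$-component $(1-t_0)$ while $G(\sigma_i)(g_i)=-t_0^{1/2}g_{i+1}$ has none, so no diagonal rescaling can match them.

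The paper's proof confronts exactly this: the intertwiner $J_n$ it writes down is \emph{not} a permutation-and-scaling matrix but a full matrix with genuinely $t$-dependent off-diagonal entries, built recursively from $J_2=\begin{pmatrix}0&1\\-1&0\end{pmatrix}$ by bordering with a row and column of powers $(-1)^k t^{\pm k/2}$. Its invertibility is then checked via a determinant recursion. Your approach could be salvaged by proving separately that $\Psi_{W_n}\circ\mathrm{rev}\cong\Psi_{W_n}$ (true, e.g.\ via the Hecke-algebra description of Burau and rigidity of its irreducible summands), but that is an additional nontrivial argument you have not supplied, and it is in any case no shorter than simply exhibiting the explicit $t$-dependent intertwiner as the paper does.
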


\begin{proof}
One can verify that for $i = 1, 2, \ldots, n-1$ : $J_n \circ \Psi_{W_n}(\sigma_i) = G(\sigma_i) \circ J_n$ where $J_n$ can be defined inductively : 
$
J_2 = \begin{pmatrix}
   0 & 1 \\
   -1 & 0
\end{pmatrix}
\text{ and }$ \\
 $$J_n =
\left (\begin{array}{c|c}
  J_{n-1} & {\begin{matrix}
   (-1)^{2}t^{(n-2)/2} \\
   \vdots \\
   (-1)^{n-1}t^{1/2} \\
   (-1)^{n}t^{0/2}
\end{matrix}} \\
\hline
(-1)^{n+1}t^{-(n-2)/2} ~\dots~ (-1)^{n+1}t^{-1/2} \text{  }~ (-1)^{n+1}t^{-0/2}& 0
\end{array}\right).
$$
\\
Moreover, evaluating the determinant of $J_n$, we deduce that $J_n$ is an automorphism. Indeed, $det J_{n+1} = (-1)^{n+1} (t^{1/2} + t^{-1/2}) det J_n + det J_{n-1}$. So $det J_n \in \mathbb{Z}[t^{\pm 1/2}]$ is invertible in $\mathbb{Q}(t^{\pm 1/2})$ since it has degree $n-2$ in both variables $t^{1/2}$ and $t^{-1/2}$.
\end{proof}

\begin{defn}
If $F \oplus G$ is the representation of the $n$-strand braid group on $W_n \oplus W_n$ built from $F$ and $G$, we consider the exterior representation $\Psi_n := \bigwedge (F \oplus G)$ on exterior algebra $\bigwedge(W_n \oplus W_n)$. 
\end{defn}

\begin{rem} Note that $b_R^n(t_0,t_0^{-1})$ and $\Psi_n$ both are $4^n = 2^{2n}-$dimensional representations. 
\end{rem}

We are going to show these two representations are isomorphic. For that we study first the case where $n=2$. Since $t_1 = t_0^{-1}$, we have a simpler R-matrix $R$ :

\begin{center}
\scalebox{0.8}{$
R = \begin{pmatrix}
   {-t_0} & . & .  &  . & .  & .  & .  & .  & .  & .  & .  & .  & .  & .  & .  & .  \\
   . & . & .  & .  & -t_0^{1/2}  & .  & .  & .  & .  & .  & .  & .  & .  & .  & .  & .  \\
   . & . & .  & .  & .  & .  & .  & .  & -t_0^{1/2}  & .  & .  & .  & .  & .  & .  & .  \\
   . & . & .  & .  & .  & .  & .  & .  & .  & .  & .  & .  & -1  & .  & .  & .  \\
   . & -t_0^{1/2} & .  & .  & 1-t_0  & .  & .  & .  & .  & .  & .  & .  & .  & .  & .  & .  \\
   . & . & .  & .  & .  & 1  & .  & .  & .  & .  & .  & .  & .  & .  & .  & .  \\
   . & . & .  & .  & .  & .  & .  & .  & .  & -1  & .  & .  &  -Y  & .  & .  & .  \\
   . & . & .  & .  & .  & .  & .  & .  & .  & .  & .  & .  & .  &  t_0^{-1/2} & .  & .  \\
   . & . & -t_0^{1/2}  & .  & .  & .  & .  & .  & 1-t_0  & .  & .  & .  & .  & .  & .  & .  \\
   . & .  & .  & .  & .  & .  & -1  & .  & .  & .  & .  & .  & -Y  & .  & .  & .  \\
   . & . & .  & .  & .  & .  & .  & .  & .  & .  & 1  & .  & .  & .  & .  & .  \\
   . & . & .  & .  & .  & .  & .  & .  & .  & .  & .  & .  & .  & .  &  t_0^{-1/2} & .  \\
   . & . & .  & -1  & .  & .  &  -Y & .  & .  & -Y  & .  & .  & -Y^{2}  & .  & .  & .  \\
   . & . &  . & .  & .  & .  & .  & t_0^{-1/2}  & .  & .  & .  & .  & .  & 1-t_0^{-1}  & .  & .  \\
   . & . & .  & .  & .  & .  & .  & .  & .  & .  & .  & t_0^{-1/2}  & .  & .  & 1-t_0^{-1}  & .   \\
   . & . & .  & .  & .  & .  & .  & .  & .  & .  & .  & .  & .  & .  & .  &  -t_0^{-1}
\end{pmatrix}
$}
\end{center}
where $Y = t_0^{1/2} - t_0^{-1/2}$. \\
\\
R-matrix $R$ can be rewritten in basis ${\mathcal{B}} = (\vert e_1 \otimes e_1 \vert , \vert e_4 \otimes e_4 \vert , \vert e_2 \otimes e_2 \vert , \vert e_3 \otimes e_3 \vert , \vert e_1 \otimes e_2 , e_2 \otimes e_1 \vert , \vert e_1 \otimes e_3 , e_3 \otimes e_1 \vert , \vert e_3 \otimes e_4 , e_4 \otimes e_3 \vert , \vert e_2 \otimes e_4 , e_4 \otimes e_2 \vert , \vert e_1 \otimes e_4 , e_2 \otimes e_3 , e_3 \otimes e_2 , e_4 \otimes e_1 \vert )$ as follows :

\begin{center}
\scalebox{0.8}{
$
\begin{pmatrix}
   -t_0 & . & .  &  . & .  & .  & .  & .  & .  & .  & .  & .  & .  & .  & .  & .  \\
   . & -t_0^{-1} & .  & .  & .  & .  & .  & .  & .  & .  & .  & .  & .  & .  & .  & .  \\
   . & . & 1  & .  & .  & .  & .  & .  & .  & .  & .  & .  & .  & .  & .  & .  \\
   . & . & .  & 1  & .  & .  & .  & .  & .  & .  & .  & .  & .  & .  & .  & .  \\
   . & . & .  & .  & 0  & -t_0^{1/2}  & .  & .  & .  & .  & .  & .  & .  & .  & .  & .  \\
   . & . & .  & .  & -t_0^{1/2}  & 1-t_0  & .  & .  & .  & .  & .  & .  & .  & .  & .  & .  \\
   . & . & .  & .  & .  & .  & 0  & -t_0^{1/2}  & .  & .  & .  & .  & .  & .  & .  & .  \\
   . & . & .  & .  & .  & .  & -t_0^{1/2}  & 1-t_0  & .  & .  & .  & .  & .  & .  & .  & .  \\
   . & . & .  & .  & .  & .  & .  & .  & 0  & t_0^{-1/2}  & .  & .  & .  & .  & .  & .  \\
   . & .  & . & .  & .  & .  & .  & .  & t_0^{-1/2}  & 1-t_0^{-1}  & .  & .  & .  & .  & .  & .  \\
   . & . & .  & .  & .  & .  & .  & .  & . & . & 0  & t_0^{-1/2}    & .  & .  & .  & .  \\
   . & . & .  & .  & .  & .  & .  & .  & .  & . & t_0^{-1/2}  & 1-t_0^{-1}   & .  & .  &  . & .  \\
   . & . & .  & .  & .  & .  &  . & .  & .  & .  & .  & .  & 0  & 0  & 0  & -1  \\
   . & . &  . & .  & .  & .  & .  & .  & .  & .  & .  & .  & 0  & 0  & -1  & -Y  \\
   . & . & .  & .  & .  & .  & .  & .  & .  & .  & .  & .  & 0  & -1  & 0  & -Y   \\
   . & . & .  & .  & .  & .  & .  & .  & .  & .  & .  & .  & -1  & -Y  & -Y  &  -Y^2
\end{pmatrix}
$}
\end{center}
Family $(f_1,f_2,g_1,g_2)$ is a basis for $W_2 \oplus W_2$. Since $B_2 = <\sigma_1>$, we are looking for a linear automorphism $I$ : $W^{\otimes 2}$ $\longrightarrow$ $\bigwedge(W_2 \oplus W_2)$ such that $\Psi_2(\sigma_1) \circ I = I \circ \underbrace{b_R^2(\sigma_1)}_{R}$. 

\noindent In basis $(f_1 , f_2 , g_1 , g_2)$, $$(F \oplus G) (\sigma_1) = \begin{pmatrix}
1-t_0^{-1} & t_0^{-1/2} &. &. \\
t_0^{-1/2} & 0 & . &. \\
. & . & 0 & -t_0^{1/2} \\
. & . & -t_0^{1/2} & 1-t_0
\end{pmatrix}
$$ 

Therefore, computation of $\Psi_2(\sigma_1)$ shows that in basis ${\mathcal{C}} = (\vert g_1 \wedge g_2 \vert , \vert f_1 \wedge f_2 \vert , \vert 1 \vert , \vert f_1 \wedge f_2 \wedge g_1 \wedge g_2 \vert , \vert g_1 , g_2 \vert , \vert f_2 \wedge g_1 \wedge g_2 , f_1 \wedge g_1 \wedge g_2 \vert , \vert f_1 \wedge f_2 \wedge g_1 , f_1 \wedge f_2 \wedge g_2 \vert , \vert f_2 , f_1 \vert , \vert f_2 \wedge g_1 , f_2 \wedge g_2 , f_1 \wedge g_1 , f_1 \wedge g_2 \vert)$ we obtain the same matrix :
\begin{center}
\scalebox{0.8}{
$
\begin{pmatrix}
   -t_0 & . & .  &  . & .  & .  & .  & .  & .  & .  & .  & .  & .  & .  & .  & .  \\
   . & -t_0^{-1} & .  & .  & .  & .  & .  & .  & .  & .  & .  & .  & .  & .  & .  & .  \\
   . & . & 1  & .  & .  & .  & .  & .  & .  & .  & .  & .  & .  & .  & .  & .  \\
   . & . & .  & 1  & .  & .  & .  & .  & .  & .  & .  & .  & .  & .  & .  & .  \\
   . & . & .  & .  & 0  & -t_0^{1/2}  & .  & .  & .  & .  & .  & .  & .  & .  & .  & .  \\
   . & . & .  & .  & -t_0^{1/2}  & 1-t_0  & .  & .  & .  & .  & .  & .  & .  & .  & .  & .  \\
   . & . & .  & .  & .  & .  & 0  & -t_0^{1/2}  & .  & .  & .  & .  & .  & .  & .  & .  \\
   . & . & .  & .  & .  & .  & -t_0^{1/2}  & 1-t_0  & .  & .  & .  & .  & .  & .  & .  & .  \\
   . & . & .  & .  & .  & .  & .  & .  & 0  & t_0^{-1/2}  & .  & .  & .  & .  & .  & .  \\
   . & .  & . & .  & .  & .  & .  & .  & t_0^{-1/2}  & 1-t_0^{-1}  & .  & .  & .  & .  & .  & .  \\
   . & . & .  & .  & .  & .  & .  & .  & . & . & 0  & t_0^{-1/2}    & .  & .  & .  & .  \\
   . & . & .  & .  & .  & .  & .  & .  & .  & . & t_0^{-1/2}  & 1-t_0^{-1}   & .  & .  &  . & .  \\
   . & . & .  & .  & .  & .  &  . & .  & .  & .  & .  & .  & 0  & 0  & 0  & -1  \\
   . & . &  . & .  & .  & .  & .  & .  & .  & .  & .  & .  & 0  & 0  & -1  & -Y  \\
   . & . & .  & .  & .  & .  & .  & .  & .  & .  & .  & .  & 0  & -1  & 0  & -Y   \\
   . & . & .  & .  & .  & .  & .  & .  & .  & .  & .  & .  & -1  & -Y  & -Y  &  -Y^2
\end{pmatrix}
$}
\end{center}
Setting $I$ : $W^{\otimes 2}$ $\longrightarrow$ $\bigwedge(W_2 \oplus W_2)$ the linear map that transforms ${\mathcal{B}}$ into ${\mathcal{C}}$, we obtain an automorphism that preserves the $\mathbb{C}[B_2]$-module structure :
$$
\Psi_2(\sigma_1) \circ I = I \circ R.
$$

The idea is to generalize that construction for $n$ larger than $2$. We choose the following reference basis for $\bigwedge(W_n \oplus W_n)$ :
$$
(f_{i_1} \wedge \ldots \wedge f_{i_p} \wedge g_{j_1} \wedge \ldots \wedge g_{j_m})_{
1 \leq i_1 < \ldots < i_p \leq n ~,~ 1 \leq j_1 < \ldots < j_m \leq n }.
$$
When we refer to $Reord(u_{i_1} \wedge \ldots \wedge u_{i_r})$, where the $u_{i_k}$ are distinct elements of $\lbrace f_1, \ldots , f_n , g_1 , \ldots , g_n \rbrace $, we mean that we rewrite the element so that it becomes part of the reference basis we just mentioned. \\
\\
We set $
I_1 = \left\{
    \begin{array}{lllll}
     W & \longrightarrow & \bigwedge(W_1 \oplus W_1) \\
        e_1 & \longmapsto & g_1 \\
        e_2 & \longmapsto & 1 \\
        e_3 & \longmapsto & f_1 \wedge g_1\\
        e_4 & \longmapsto & f_1\\
    \end{array}
\right. 
$  and  $
I_2 = \left\{
    \begin{array}{lllll}
     W^{\otimes 2} & \longrightarrow & \bigwedge(W_2 \oplus W_2) \\
        e_i \otimes e_1 & \longmapsto & I_1(e_i) \wedge g_2 \\
        e_i \otimes e_2 & \longmapsto & I_1(e_i) \\
        e_i \otimes e_3 & \longmapsto & Reord(I_1(e_i) \wedge f_2 \wedge g_2)\\
        e_i \otimes e_4 & \longmapsto & Reord(I_1(e_i) \wedge f_2)\\
    \end{array}
\right. 
$. \\
\\
An elementary calculation shows that $I_2 = I$. We can extend these maps by induction setting :
$$
I_n = \left\{
    \begin{array}{cllll}
     W^{\otimes n} & \longrightarrow & \bigwedge(W_n \oplus W_n) \\
        e_{i_1} \otimes \ldots \otimes e_{i_{n-1}} \otimes e_1 & \longmapsto & I_{n-1}(e_{i_1} \otimes \ldots \otimes e_{i_{n-1}}) \wedge g_n \\
        e_{i_1} \otimes \ldots \otimes e_{i_{n-1}} \otimes e_2 & \longmapsto & I_{n-1}(e_{i_1} \otimes \ldots \otimes e_{i_{n-1}}) \\
        e_{i_1} \otimes \ldots \otimes e_{i_{n-1}} \otimes e_3 & \longmapsto & Reord(I_{n-1}(e_{i_1} \otimes \ldots \otimes e_{i_{n-1}}) \wedge f_n \wedge g_n)\\
        e_{i_1} \otimes \ldots \otimes e_{i_{n-1}} \otimes e_4 & \longmapsto & Reord(I_{n-1}(e_{i_1} \otimes \ldots \otimes e_{i_{n-1}}) \wedge f_n)\\
    \end{array}
\right. .
$$
It is easy to see that map $I_n$ sends the natural basis of $W^{\otimes n}$ derived from $(e_1,e_2,e_3,e_4)$ on our reference basis of $\bigwedge(W_n \oplus W_n)$. In particular, $I_n$ is a linear automorphism. Note that map $I_n$ can also be written directly :
$$
I_n ( e_{i_1} \otimes \ldots \otimes e_{i_{n}}) = \big( \underset{k \text{ : } i_k = 3,4}{\bigwedge} f_k \big) \wedge \big( \underset{k \text{ : } i_k = 1,3}{\bigwedge} g_k \big)
$$

\begin{prop} \label{commutation}
Map $I_n$ is a $\mathbb{C}[B_n]$-module automorphism. That is, for any $b\in B_n$ :
$$
\Psi_n(b) \circ I_n = I_n \circ b_R^n(b).
$$
\end{prop}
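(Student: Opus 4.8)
My plan is to reduce the statement to the two-strand case that has already been checked by hand. Since $\Psi_n$ and $b_R^n$ are group homomorphisms and $I_n$ is a fixed linear isomorphism, the set $\{b\in B_n \,:\, \Psi_n(b)\circ I_n = I_n\circ b_R^n(b)\}$ is a subgroup of $B_n$, so it suffices to prove the identity for each Artin generator $\sigma_i$, $1\le i\le n-1$; I would then localize such an identity to the two tensor slots $i,i+1$ and recognize what is left as the $n=2$ identity $\Psi_2(\sigma_1)\circ I_2=I_2\circ R$, i.e.\ the equality of the matrix of $\Psi_2(\sigma_1)$ in the basis $\mathcal{C}$ with that of $R$ in the basis $\mathcal{B}$ established above.

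The localization rests on two observations. First, $\Psi_n(\sigma_i)=\bigwedge\big((F\oplus G)(\sigma_i)\big)$ is an \emph{algebra} endomorphism of $\bigwedge(W_n\oplus W_n)$ that fixes every generator $f_k,g_k$ with $k\notin\{i,i+1\}$ and acts on $f_i,f_{i+1}$ (resp.\ on $g_i,g_{i+1}$) through $F(\sigma_i)$ (resp.\ through $G(\sigma_i)$); in particular it preserves the bigrading of $\bigwedge(W_n\oplus W_n)$ that counts $f$-factors and $g$-factors separately. Second --- and this is the crucial point --- the \emph{specialized} $R$-matrix (the $16\times 16$ matrix displayed above for $t_1=t_0^{-1}$) preserves the bigrading on $W\otimes W$ given on basis vectors by $\deg_f(e_a\otimes e_b)=[a\in\{3,4\}]+[b\in\{3,4\}]$ and $\deg_g(e_a\otimes e_b)=[a\in\{1,3\}]+[b\in\{1,3\}]$. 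This is read off the block decomposition of $R$ in the basis $\mathcal{B}$: each of its nine blocks lies in a single bidegree, and the block sizes $1,1,1,1,2,2,2,2,4$ are exactly the dimensions of the bigraded pieces of $\bigwedge\langle f_1,f_2,g_1,g_2\rangle$. Hence $b_R^n(\sigma_i)$ sends a standard basis vector $e_{a_1}\otimes\ldots\otimes e_{a_n}$ to a combination of standard basis vectors that all agree with it outside the slots $i,i+1$ and all have the same local bidegree in those two slots.

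Granting this, I would conclude as follows. By the closed formula $I_n(e_{a_1}\otimes\ldots\otimes e_{a_n})=\big(\bigwedge_{k:\,a_k\in\{3,4\}}f_k\big)\wedge\big(\bigwedge_{k:\,a_k\in\{1,3\}}g_k\big)$, the vector $I_n(e_{a_1}\otimes\ldots\otimes e_{a_n})$ is a wedge monomial in which the factors $f_i,f_{i+1},g_i,g_{i+1}$ (present or not according to $a_i,a_{i+1}$) occupy fixed positions, the remaining factors $f_k,g_k$ with $k\notin\{i,i+1\}$ being determined by the other $a_k$. Applying $b_R^n(\sigma_i)$ and then $I_n$ alters only the sub-product coming from slots $i,i+1$ (it amounts to applying $I_2\circ R$ to those two slots), and applying $\Psi_n(\sigma_i)$ alters only that same sub-product, $\Psi_n(\sigma_i)$ being an algebra map fixing all other $f_k,g_k$. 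Because the specialized $R$ preserves the local bidegree, the degrees of the $f$- and $g$-parts contributed by slots $i,i+1$ are constant along the whole $R$-orbit; therefore the surrounding factors always occupy the same positions and no residual Koszul sign appears, so these surrounding factors cancel identically from the two sides. What remains is precisely $\Psi_2(\sigma_1)\circ I_2=I_2\circ R$ read inside $\bigwedge\langle f_i,f_{i+1},g_i,g_{i+1}\rangle$ after renaming $1,2$ to $i,i+1$, which is already known. This gives the relation for every $\sigma_i$, hence for all $b\in B_n$.

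I expect the only real difficulty to be the sign bookkeeping: one must make sure that the part of $I_n(e_{a_1}\otimes\ldots\otimes e_{a_n})$ carried by the slots $i,i+1$ can be separated from the surrounding wedge \emph{uniformly over the $R$-orbit}, with no sign left behind --- and that uniformity is exactly what the bigrading-preservation of the \emph{specialized} $R$-matrix provides (for the generic $R$ it fails, which fits the fact that the statement is only about the specialization $t_1=t_0^{-1}$). A more computational alternative would be induction on $n$: the generators $\sigma_1,\ldots,\sigma_{n-2}$ lie in the copy of $B_{n-1}$ on the first $n-1$ strands and are dispatched by the inductive hypothesis together with the compatibility of $I_n$ with $\bigwedge(W_n\oplus W_n)\cong\bigwedge(W_{n-1}\oplus W_{n-1})\otimes\bigwedge\langle f_n,g_n\rangle$, and only $\sigma_{n-1}$ calls for a genuine two-slot computation, which again collapses to $n=2$.
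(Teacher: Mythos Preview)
Your argument is correct and is more conceptual than the paper's. The paper proves the proposition by induction on $n$ (this is your ``computational alternative''): for $\sigma_k$ with $k\le n-2$ it invokes the inductive hypothesis, tracking the sign $(-1)^{\mu(e_{i_1}\otimes\ldots\otimes e_{i_{n-1}})}$ coming from $Reord$, and for $\sigma_{n-1}$ it simply verifies the identity by hand for all sixteen pairs $(i_{n-1},i_n)$. Your approach avoids that sixteen-case check entirely. The key observation --- that the specialized $R$ preserves the bigrading $(\deg_f,\deg_g)$ --- is exactly the mechanism behind the paper's sign bookkeeping (the quantity $\mu$ the paper tracks is $\deg_g$, and the paper uses that every term of $R(e_{a_i}\otimes e_{a_{i+1}})$ has the same $\mu$), but you exploit it globally rather than inductively. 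Concretely, since the closed formula puts $I_n(e_{a_1}\otimes\ldots\otimes e_{a_n})$ directly in the reference basis with all $f$'s before all $g$'s, and since $F(\sigma_i),G(\sigma_i)$ keep $f_i,f_{i+1}$ (resp.\ $g_i,g_{i+1}$) inside their own blocks, both sides expand in the reference basis with no reordering at all; the coefficient comparison is then literally the $n=2$ identity.

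One small correction: your parenthetical remark that ``for the generic $R$ it fails'' is not right. The unspecialized $R$-matrix already has the same block pattern in the basis $\mathcal{B}$ and hence also preserves the bigrading; your localization step would go through verbatim for generic $(t_0,t_1)$. What actually breaks generically is the base case $n=2$ itself (the matrix of $\Psi_2(\sigma_1)$ in $\mathcal{C}$ only matches $R$ after setting $t_1=t_0^{-1}$), not the passage from $n=2$ to general $n$. This does not affect the validity of your proof.
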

\begin{proof}
We prove the commutation by induction on $n$. For details, see section \ref{appendix} where we do the necessary computations.
\end{proof}

\subsection{A convenient expression for $LG^{2,1}$}
Now we have built an exterior representation that is isomorphic to $b_R^n(t_0, t_0^{-1})$, we use it to write the reduction of the Links-Gould polynomial as the product of two quantities we will then identify. \\
\\
Using proposition \ref{commutation}, we can write :
$$
LG(L ; t_0 , t_0^{-1}) = \frac{1}{4}~trace(\underbrace{I_n \circ (id_{W} \otimes \mu^{\otimes n-1}) \circ I_n^{-1}}_{\tilde{\mu}} \circ \Psi_n(b)).
$$
We wish to explicit $\tilde{\mu}$.
\begin{lem} \label{mutilde}
Map $\tilde{\mu}$ can be expressed on the reference basis of $\bigwedge(W_n \oplus W_n)$ : 
\begin{align*}
\tilde{\mu}(f_{i_1} \wedge \ldots \wedge f_{i_p} \wedge g_{j_1} \wedge \ldots \wedge g_{j_m}) &= t_0^{-(n-1)} (-1)^{n-1} (-1)^{\# \{ k \in \{ 2, \ldots , n \} | f_k \text{ appears} \}} \\
& (-1)^{\# \{ k \in \{ 2, \ldots , n \} | g_k \text{ appears} \}} f_{i_1} \wedge \ldots \wedge f_{i_p} \wedge g_{j_1} \wedge \ldots \wedge g_{j_m} .
\end{align*}
\end{lem}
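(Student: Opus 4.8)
The plan is to compute $\tilde\mu$ directly from its definition $\tilde\mu = I_n \circ (\mathrm{id}_W \otimes \mu^{\otimes n-1}) \circ I_n^{-1}$ by tracking what $\mu$ does on each tensor factor through the explicit formula $I_n(e_{i_1}\otimes\cdots\otimes e_{i_n}) = \big(\bigwedge_{k:\,i_k=3,4} f_k\big)\wedge\big(\bigwedge_{k:\,i_k=1,3} g_k\big)$. Since $I_n$ sends the standard basis of $W^{\otimes n}$ bijectively onto the reference basis of $\bigwedge(W_n\oplus W_n)$, it suffices to check the formula on a basis element $f_{i_1}\wedge\cdots\wedge f_{i_p}\wedge g_{j_1}\wedge\cdots\wedge g_{j_m}$, which is $I_n(e_{\ell_1}\otimes\cdots\otimes e_{\ell_n})$ for the unique tuple $(\ell_1,\dots,\ell_n)$ determined by: $\ell_k=3$ if $k$ is among both the $i$'s and the $j$'s; $\ell_k=4$ if $k$ is among the $i$'s but not the $j$'s; $\ell_k=1$ if $k$ is among the $j$'s but not the $i$'s; and $\ell_k=2$ otherwise. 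The map $(\mathrm{id}_W\otimes\mu^{\otimes n-1})$ multiplies this vector by $\prod_{k=2}^n \mu_{\ell_k}$, where $\mu_1 = t_0^{-1}$, $\mu_2=-t_1=-t_0$ (recall $t_1=t_0^{-1}$, wait — here we must be careful: we are in the specialization $t_1=t_0^{-1}$, so $\mu = \mathrm{diag}(t_0^{-1},-t_0^{-1},-t_0^{-1},t_0^{-1})$), i.e. $\mu_1=\mu_4=t_0^{-1}$ and $\mu_2=\mu_3=-t_0^{-1}$. Applying $I_n$ again returns the same reference basis element, so $\tilde\mu$ is diagonal with eigenvalue $\prod_{k=2}^n \mu_{\ell_k}$.

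Next I would collect this product: each $k\in\{2,\dots,n\}$ contributes a factor $t_0^{-1}$ (so $t_0^{-(n-1)}$ overall) times a sign. The sign from index $k$ is $-1$ exactly when $\ell_k\in\{2,3\}$, i.e. exactly when $k$ is \emph{not} among the $i$'s but \emph{is} or \emph{is not} among the $j$'s in the right combination — more precisely $\ell_k=2$ means $f_k$ absent and $g_k$ absent, $\ell_k=3$ means $f_k$ present and $g_k$ present. One checks that the sign contributed by $k$ equals $(-1)^{1 + [\,f_k\text{ present}\,] + [\,g_k\text{ present}\,]}$: indeed for $\ell_k=1$ ($f$ absent, $g$ present) this is $(-1)^{1+0+1}=+1=\mathrm{sgn}(\mu_1)$; for $\ell_k=2$ it is $(-1)^{1}=-1=\mathrm{sgn}(\mu_2)$; for $\ell_k=3$ it is $(-1)^{1+1+1}=-1=\mathrm{sgn}(\mu_3)$; for $\ell_k=4$ it is $(-1)^{1+1+0}=+1=\mathrm{sgn}(\mu_4)$. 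Multiplying over $k=2,\dots,n$ gives $(-1)^{n-1}\cdot(-1)^{\#\{k\in\{2,\dots,n\}:f_k\text{ present}\}}\cdot(-1)^{\#\{k\in\{2,\dots,n\}:g_k\text{ present}\}}$, which together with the $t_0^{-(n-1)}$ is exactly the claimed scalar.

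The only delicate points are bookkeeping, not conceptual. First, one should double-check the index-$1$ factor is genuinely omitted — this is where the $\mathrm{id}_W$ on the first tensor slot enters, and it is the reason the counts in the exponents run over $k\in\{2,\dots,n\}$ rather than all of $\{1,\dots,n\}$. Second, although the $\mathrm{Reord}$ operations in the definition of $I_n$ introduce signs when rewriting wedge products into the reference ordering, these signs are irrelevant here: $I_n$ and $I_n^{-1}$ act on the \emph{same} basis element before and after the diagonal map $\mathrm{id}_W\otimes\mu^{\otimes n-1}$, so whatever reordering sign $I_n^{-1}$ produces is exactly undone by $I_n$. Hence no sign from $\mathrm{Reord}$ survives, and the computation reduces to the elementary product of diagonal entries described above. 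I expect the main (minor) obstacle to be stating the correspondence $(\ell_1,\dots,\ell_n)\leftrightarrow$ (which $f$'s and $g$'s appear) cleanly enough that the sign count is manifestly correct; once that dictionary is in place the lemma is immediate.
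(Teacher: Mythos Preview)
Your proposal is correct and follows essentially the same approach as the paper: both compute $\tilde\mu$ directly from its definition as $I_n\circ(\mathrm{id}_W\otimes\mu^{\otimes n-1})\circ I_n^{-1}$, use that $I_n$ bijects the tensor basis onto the reference basis so that $\tilde\mu$ is diagonal, and then identify the eigenvalue as $t_0^{-(n-1)}$ times a sign depending on which $f_k$, $g_k$ appear. The only cosmetic difference is in the sign bookkeeping: the paper first writes the sign as $(-1)^{\#\{l_k=2\}}(-1)^{\#\{l_k=3\}}$ and then algebraically massages this into the stated form by complementing against $(-1)^{n-1}$ and inserting $((-1)^{\#\{l_k=3\}})^2$, whereas you verify the per-factor identity $\mathrm{sgn}(\mu_{\ell_k})=(-1)^{1+[f_k\text{ present}]+[g_k\text{ present}]}$ by a direct four-case check and then multiply over $k$.
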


\begin{proof}
If $f_{i_1} \wedge \ldots \wedge f_{i_p} \wedge g_{j_1} \wedge \ldots \wedge g_{j_m}$ is an element of the basis of $\bigwedge(W_n \oplus W_n)$, we denote by $e_{l_1} \otimes \ldots \otimes e_{l_n}$ it's image under $I_n^{-1}$. That way, $I_n(e_{l_1} \otimes \ldots \otimes e_{l_n}) = f_{i_1} \wedge \ldots \wedge f_{i_p} \wedge g_{j_1} \wedge \ldots \wedge g_{j_m}$.

\begin{align*}
\tilde{\mu}(f_{i_1} \wedge \ldots \wedge f_{i_p} \wedge g_{j_1} \wedge \ldots \wedge g_{j_m}) &= I_n \circ (id_{W} \otimes \mu^{\otimes n-1})(e_{l_1} \otimes \ldots \otimes e_{l_n}) \\ 
 &={t_0^{-(n-1)}(-1)^{\# \{ k \in \{ 2, \ldots , n \} | l_k = 2 \} } (-1)^{\# \{ k \in \{ 2, \ldots , n \} | l_k = 3 \} }} \\
 & {I_n(e_{l_1} \otimes \ldots \otimes e_{l_n})} \\
 &=t_0^{-(n-1)}(-1)^{\# \{ k \in \{ 2, \ldots , n \} | l_k=2 \}} \\
 & \text{  } (-1)^{\# \{ k \in \{ 2, \ldots , n \} | l_k=3 \}} f_{i_1} \wedge \ldots \wedge f_{i_p} \wedge g_{j_1} \wedge \ldots \wedge g_{j_m}
\end{align*}  
\\
But :\\
$(-1)^{\# \{ k \in \{ 2, \ldots , n \} | l_k=2 \}} (-1)^{\# \{ k \in \{ 2, \ldots , n \} | l_k=3 \}} \\
= (-1)^{n-1} (-1)^{\# \{ k \in \{ 2, \ldots , n \} | l_k=1 \}} (-1)^{\# \{ k \in \{ 2, \ldots , n \} | l_k=4 \}} \\
= (-1)^{n-1} (-1)^{\# \{ k \in \{ 2, \ldots , n \} | l_k=1 \}} (-1)^{\# \{ k \in \{ 2, \ldots , n \} | l_k=4 \}} \left( (-1)^{\# \{ k \in \{ 2, \ldots , n \} | l_k=3 \}} \right)^2 \\
= (-1)^{n-1} (-1)^{\# \{ k \in \{ 2, \ldots , n \} | l_k=3 \text{ or } l_k=4 \}} (-1)^{\# \{ k \in \{ 2, \ldots , n \} | l_k=1 \text{ or } l_k=3 \}} \\
= (-1)^{n-1} (-1)^{\# \{ k \in \{ 2, \ldots , n \} | f_k \text{ appears} \}} (-1)^{\# \{ k \in \{ 2, \ldots , n \} | g_k \text{ appears} \}}
$
\\
\\
This provides the result.

\end{proof}

Given the expression for $\tilde{\mu}$ we just obtained, and the special form of representation $\Psi_n$ we have :

\begin{prop}
Invariant $LG(L ; t_0,t_0^{-1})$ can be written as a product, with each term depending only on one of the copies of the Burau representation. 
\end{prop}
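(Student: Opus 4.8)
The plan is to exploit the fact that the representation $\Psi_n = \bigwedge(F\oplus G)$ lives on the exterior algebra $\bigwedge(W_n\oplus W_n)$, which splits canonically as $\bigwedge W_n \otimes \bigwedge W_n$ (the first factor built from the $f_i$'s, the second from the $g_j$'s), and that under this splitting $\Psi_n = \bigwedge F \,\otimes\, \bigwedge G$ as braid representations, because $F$ and $G$ act on complementary summands. First I would make this tensor decomposition explicit: a reference basis element $f_{i_1}\wedge\dots\wedge f_{i_p}\wedge g_{j_1}\wedge\dots\wedge g_{j_m}$ corresponds to $(f_{i_1}\wedge\dots\wedge f_{i_p})\otimes(g_{j_1}\wedge\dots\wedge g_{j_m})$, and $\Psi_n(b)$ acts as $\bigwedge F(b)\otimes\bigwedge G(b)$ on this tensor product (the sign bookkeeping in reordering is exactly what makes $\bigwedge$ a functor, so no genuine subtlety here).

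Next I would observe, using Lemma~\ref{mutilde}, that $\tilde\mu$ is also multiplicative across this splitting: the scalar it assigns to a basis vector factors as $t_0^{-(n-1)}(-1)^{n-1}$ times $(-1)^{\#\{k\geq 2 : f_k\text{ appears}\}}$ times $(-1)^{\#\{k\geq 2 : g_k\text{ appears}\}}$. So if I define $\nu_f$ on $\bigwedge W_n$ by $\nu_f(f_{i_1}\wedge\dots\wedge f_{i_p}) = (-1)^{\#\{k\geq 2 : i_\ell = k \text{ for some }\ell\}}\,f_{i_1}\wedge\dots\wedge f_{i_p}$ and $\nu_g$ analogously on the $g$-copy, then $\tilde\mu = t_0^{-(n-1)}(-1)^{n-1}\,(\nu_f\otimes\nu_g)$. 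Concretely $\nu_f = \bigwedge(\operatorname{diag}(1,-1,\dots,-1))$ and similarly for $\nu_g$, i.e. each is the exterior power of a diagonal map on $W_n$ whose first entry is $1$ and whose remaining entries are $-1$.

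Then I would combine the two facts. Since trace is multiplicative on tensor products, and $\tilde\mu\circ\Psi_n(b) = t_0^{-(n-1)}(-1)^{n-1}\,\big(\nu_f\circ\textstyle\bigwedge F(b)\big)\otimes\big(\nu_g\circ\textstyle\bigwedge G(b)\big)$, we get
\begin{equation*}
LG(L;t_0,t_0^{-1}) = \frac{(-1)^{n-1}}{4\,t_0^{\,n-1}}\;\operatorname{trace}\!\big(\nu_f\circ\textstyle\bigwedge F(b)\big)\cdot\operatorname{trace}\!\big(\nu_g\circ\textstyle\bigwedge G(b)\big),
\end{equation*}
which is the desired product, each factor depending only on one copy of the Burau representation ($F$ is Burau in the variable $t_0^{-1}$, and $G$ is isomorphic to Burau by the Proposition proved above). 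I would close by noting $\operatorname{trace}(\nu\circ\bigwedge\rho(b)) = \det(I + \nu_1\rho(b))$ type identities are not needed yet — only the bare factorization is claimed here — so the proposition follows directly.

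The main obstacle is the first step: verifying that the reordering signs implicit in $\Psi_n$ and in the definition of $I_n$ are exactly compatible with the naive tensor factorization $\bigwedge(W_n\oplus W_n)\cong\bigwedge W_n\otimes\bigwedge W_n$, so that $\Psi_n(b)$ really is $\bigwedge F(b)\otimes\bigwedge G(b)$ with no extra sign twist. This is where one must be careful that $F$ acts trivially on the $g$-summand and $G$ trivially on the $f$-summand (true by construction, since $F\oplus G$ is a direct sum), and that the Koszul signs in $\bigwedge(V\oplus V') = \bigwedge V\otimes\bigwedge V'$ are handled consistently with the $Reord$ convention fixed earlier; once that bookkeeping is pinned down, the rest is immediate from multiplicativity of the trace.
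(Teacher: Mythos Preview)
Your proposal is correct and is essentially the same argument as the paper's, just packaged more abstractly: you invoke the Koszul isomorphism $\bigwedge(W_n\oplus W_n)\cong\bigwedge W_n\otimes\bigwedge W_n$ and the multiplicativity of the trace on tensor products, while the paper unwinds this same factorization explicitly by expanding the trace as a sum over the reference basis and splitting each diagonal matrix coefficient of $\tilde\mu\circ\Psi_n(b)$ into its $f$-part and $g$-part. The ``main obstacle'' you flag (sign compatibility of $Reord$ with the tensor splitting) is precisely the step the paper handles implicitly by working in the ordered reference basis, and your final formula coincides with the paper's product of two sums once each trace is written out in that basis.
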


\begin{proof}
Recall $LG(L ; t_0 , t_0^{-1}) = \frac{1}{4}~trace((id_{W} \otimes \mu^{\otimes n-1}) \circ b_R^n(b))$, where :\\
$$\mu = \begin{pmatrix}
   t_0^{-1} & . & . & . \\
   . & -t_0^{-1} & . & . \\
   . & . & -t_0^{-1} & . \\
   . & . & . & t_0^{-1}
\end{pmatrix}.$$ \\

Using that we can write :
\begin{flalign*}
LG(L ; t_0 , t_0^{-1}) &= \frac{1}{4}~trace( \tilde{\mu} \circ \Psi_n(b)) \\
&= \frac{1}{4}~ \sum_{\begin{array}{c} 
\scriptstyle 1 \leq i_1 < \ldots < i_p \leq n \\
\scriptstyle 1 \leq j_1 < \ldots < j_m \leq n
\end{array}} (f_{i_1} \wedge \ldots \wedge g_{j_m})^*\left( \tilde{\mu} \circ \Psi_n(b)(f_{i_1} \wedge \ldots \wedge g_{j_m}) \right) \\
\end{flalign*}
where $(f_{i_1} \wedge \ldots \wedge g_{j_m})^*$ indicates a vector of the dual basis of the reference basis. But given lemma \ref{mutilde},
\begin{align*}
(f_{i_1} \wedge \ldots \wedge g_{j_m})^*\left( \tilde{\mu} \circ \Psi_n(b)(f_{i_1} \wedge \ldots \wedge g_{j_m}) \right) 
&= (-t_0)^{-(n-1)} (-1)^{\# \{ k \in \{ 2, \ldots , n \} | f_k \text{ appears} \}} \\
& (-1)^{\# \{ k \in \{ 2, \ldots , n \} | g_k \text{ appears} \}} \\
&  (f_{i_1} \wedge \ldots \wedge g_{j_m})^*\left(\Psi_n(b)(f_{i_1} \wedge \ldots \wedge g_{j_m}) \right)
\end{align*}
\\
Also, $(f_{i_1} \wedge \ldots \wedge f_{i_p} \wedge g_{j_1} \wedge \ldots \wedge g_{j_m})^*(\underbrace{\Psi_n(b)(f_{i_1} \wedge \ldots \wedge f_{i_p} \wedge g_{j_1} \wedge \ldots \wedge g_{j_m}}_{\bigwedge F(b)(f_{i_1} \wedge \ldots \wedge f_{i_p})\wedge \bigwedge G(b)(g_{j_1} \wedge \ldots \wedge g_{j_m})}))$ \\
$\text{}$\ \ \ \ \ \ $= (f_{i_1} \wedge \ldots \wedge f_{i_p})^*(\bigwedge F(b)(f_{i_1} \wedge \ldots \wedge f_{i_p}))~(g_{j_1} \wedge \ldots \wedge g_{j_m})^*(\bigwedge G(b)(g_{j_1} \wedge \ldots \wedge g_{j_m}))$. \\
\\
That way we have the following expression for $LG(L ; t_0 , t_0^{-1})$:
\begin{align*}
\frac{1}{4}~(-t_0)^{-(n-1)} \sum_{\begin{array}{c} 
\scriptstyle 1 \leq i_1 < \ldots < i_p \leq n 
\end{array}} \Bigg( (-1)^{\# \{ k \in \{ 2, \ldots , n \} | f_k \text{ appears} \}} (f_{i_1} \wedge \ldots \wedge f_{i_p})^*(\bigwedge F(b)(f_{i_1} \wedge \ldots \wedge f_{i_p})) \Bigg) \\
* \sum_{\begin{array}{c} 
\scriptstyle 1 \leq j_1 < \ldots < j_m \leq n 
\end{array}} \Bigg( (-1)^{\# \{ k \in \{ 2, \ldots , n \} | g_k \text{ appears} \}} (g_{j_1} \wedge \ldots \wedge g_{j_m})^*(\bigwedge G(b)(g_{j_1} \wedge \ldots \wedge g_{j_m})) \Bigg)
\end{align*}
\end{proof}

Now we wish to show that each of these two sums is equal to $\Delta_L(t_0)$ up to multiplication by a unit of $\mathbb{C}[t_0^{\pm 1}]$, that is up to multiplication by $\pm t_0^n$, $n\in \mathbb{Z}$.
%
%
\section{Proof of the main theorem}

A careful analysis of \cite{Oht}, appendix C, shows that we have a coefficient in front of the partial trace in the expression of the Alexander-Conway polynomial :
$$
\Delta_L(t_0) \overset{\bullet}{=} t_0^{-(n-1)/2} ~ \frac{1}{2} ~ trace((id_{V} \otimes h^{\otimes n-1}) \circ \Psi_{V^{\otimes{n}}}(b)).
$$
So we can write more simply :
$$
\Delta_L(t_0) \overset{\bullet}{=}\frac{1}{2} ~ trace((id_{V} \otimes \tilde{h}^{\otimes n-1}) \circ \Psi_{V^{\otimes{n}}}(b)) \text{, where } \tilde{h} = \begin{pmatrix}
   1 & 0 \\
   0 & -1
\end{pmatrix}.
$$

\begin{prop}
Let $J_n$ : $V^{\otimes n}$ $\longrightarrow$ $\bigwedge W_n$, $e_{i_1} \otimes \ldots \otimes e_{i_n}$ $\longmapsto$ $\underset{k \text{ : } i_k = 1}{\bigwedge} f_k$. Then $J_n$ is a $\mathbb{C}[B_n]$-module automorphism :
$$
\bigwedge \Psi_{W_n}(b) \circ J_n = J_n \circ \Psi_{V^{\otimes n}}(b) \text{, } \forall b \in B_n .
$$
\end{prop}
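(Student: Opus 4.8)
The plan is to verify the intertwining relation on the Artin generators $\sigma_i$, since these generate $B_n$ and both $\bigwedge\Psi_{W_n}$ and $\Psi_{V^{\otimes n}}$ are group homomorphisms; once the relation holds on each $\sigma_i$ it holds on all of $B_n$. So fix $i\in\{1,\dots,n-1\}$ and a basis vector $e_{i_1}\otimes\cdots\otimes e_{i_n}$ of $V^{\otimes n}$. Because $\Psi_{V^{\otimes n}}(\sigma_i)$ acts only in tensor slots $i$ and $i+1$, and $J_n$ simply records in which slots the index is $1$ (placing the corresponding $f_k$ in increasing order of $k$), only the pair of factors $e_{i_i}\otimes e_{i_{i+1}}$ matters: everything outside slots $i,i+1$ contributes an unchanged wedge prefix $f_{k_1}\wedge\cdots$ and suffix $\cdots\wedge f_{k_r}$ that passes through both sides. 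Likewise $\bigwedge\Psi_{W_n}(\sigma_i)$, being the exterior power of the Burau matrix which only mixes $f_i$ and $f_{i+1}$, acts trivially on the $f_k$ with $k\neq i,i+1$ and nontrivially only on the sub-wedge spanned by $f_i,f_{i+1}$. Thus the claim reduces to a finite check on the four cases $(i_i,i_{i+1})\in\{(1,1),(1,2),(2,1),(2,2)\}$ (writing $2$ for "index not equal to $1$"; with $\dim V=2$ these are literally the only cases, but the argument works verbatim if one only cares about the index being $1$ or not).

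The four cases to match are the following. When $(i_i,i_{i+1})=(2,2)$: no $f$-factor in slots $i,i+1$, so the left side fixes the wedge; on the right, $R_1(e_1\otimes e_1)$ — here I mean $e_0\otimes e_0$ in the paper's notation for $V$, or more precisely whichever basis vector is not indexed $1$ — in any event $\Psi_{V^{\otimes n}}(\sigma_i)$ returns a scalar multiple of the same vector with no new slot-$1$ index created, matching the $f_i$-free wedge on the other side. When exactly one of $i_i,i_{i+1}$ equals $1$ (the mixed cases): the Burau matrix sends $f_i\mapsto(1-t)f_i+t^{1/2}f_{i+1}$ and $f_{i+1}\mapsto t^{1/2}f_i$, so a single factor $f_i$ or $f_{i+1}$ in the sub-wedge is sent to a linear combination of $f_i$ and $f_{i+1}$; the coefficients must coincide, after reordering into the reference basis, with the corresponding entries of $R_1$ acting on the pair of tensor factors carrying indices $(1,2)$ or $(2,1)$. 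When $(i_i,i_{i+1})=(1,1)$: both $f_i$ and $f_{i+1}$ appear, $\bigwedge\Psi_{W_n}(\sigma_i)(f_i\wedge f_{i+1}) = \big((1-t)f_i+t^{1/2}f_{i+1}\big)\wedge\big(t^{1/2}f_i\big) = -t\,(f_i\wedge f_{i+1})$, which must match the $(e_1\otimes e_1,e_1\otimes e_1)$-type entry of $R_1$, namely the $-t$ in the lower-right corner. Reading off the relevant entries of the displayed $R_1$ and comparing with the $2\times 2$ Burau block — and tracking the signs produced by reordering wedges back into the reference basis $\bigwedge_{k:i_k=1}f_k$ — gives equality in every case.

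The only real subtlety, and the step I expect to demand the most care, is the \emph{sign bookkeeping}: in the mixed cases the Burau image of $f_i$ contains a term $t^{1/2}f_{i+1}$, and inserting $f_{i+1}$ into a wedge whose other factors sit on both sides of slot $i$ may require transpositions to restore increasing order, potentially flipping a sign; one must check that this reordering sign is exactly absorbed by the way $J_n$ already ordered the original vector and by the off-diagonal $t^{1/2}$ entries of $R_1$ (which, crucially, are placed symmetrically in $R_1$ with a $+$ sign, while the Burau off-diagonals are also $+t^{1/2}$). Since here $J_n$ records indices equal to $1$ with the $f_k$ in the \emph{same} relative order as the tensor slots, and $R_1$'s only sign-bearing entries are the diagonal $1-t$ and $-t$, the reordering signs cancel cleanly; but this is precisely the point that cannot be skipped. (In fact one may simply observe that $R_1$ is conjugate, by the reordering permutation, to the matrix of $\bigwedge^{\bullet}$ of the $2$-dimensional Burau block on the basis $\{1,\,f_i,\,f_{i+1},\,f_i\wedge f_{i+1}\}$, and this conjugation is exactly what $J_n$ implements slot by slot.) Once the generator-level identity is established with correct signs, the extension to arbitrary $b\in B_n$ is immediate by multiplicativity, completing the proof.
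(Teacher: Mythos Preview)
Your approach is correct and, if anything, cleaner than the route the paper gestures at. The paper does not actually prove this proposition: it cites Ohtsuki's appendix and remarks that the argument is ``similar to the one we did in the previous section'', meaning the inductive proof of Proposition~\ref{commutation} carried out in the appendix. That inductive proof splits off the last tensor slot, uses the inductive hypothesis for $\sigma_k$ with $k\le n-2$, and then checks $\sigma_{n-1}$ by hand over all pairs $(i_{n-1},i_n)$. Your argument bypasses the induction by exploiting locality directly: since $R_1$ acts only in slots $i,i+1$ and the Burau matrix only mixes $f_i,f_{i+1}$, the whole statement collapses to matching the $4\times 4$ matrix $R_1$ against $\bigwedge^{\bullet}$ of the $2\times 2$ Burau block. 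For this particular $J_n$ (with only one family of $f_k$'s and no $Reord$ subtleties of the kind that force the appendix to track $\mu(e_{i_1}\otimes\cdots)$), your shortcut is the natural one.

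Two small clean-ups. First, the basis of $V$ in the paper is $(e_0,e_1)$, so your ``$2$'' should be ``$0$''; you catch this yourself, but it is worth writing the four cases as $(i_i,i_{i+1})\in\{0,1\}^2$ from the start. Second, your worry about reordering signs is unfounded here and can be dropped: because $i$ and $i+1$ are \emph{consecutive} indices, replacing $f_i$ by $f_{i+1}$ (or conversely) in an increasing wedge that lacks the other one never disturbs the order, so no transposition sign ever arises. With those edits, the four checks are immediate: $R_1(e_0\otimes e_0)=e_0\otimes e_0$ matches the identity on the $f_i$-free wedge; $R_1(e_1\otimes e_1)=-t\,e_1\otimes e_1$ matches $\Psi_{W_n}(\sigma_i)(f_i)\wedge\Psi_{W_n}(\sigma_i)(f_{i+1})=-t\,f_i\wedge f_{i+1}$; and the two mixed cases match the columns of the Burau block verbatim.
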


The proof is quite similar to the one we did in the previous section. It is detailed in \cite{Oht}, appendix C, where what we just called $J_n$ is denoted by $I_n$, and is introduced by induction.

Applying $J_n$, we can express the Alexander polynomial differently : 
$$
\Delta_L(t_0) \overset{\bullet}{=}\frac{1}{2} ~ trace \big( \underbrace{J_n \circ (id_{V} \otimes \tilde{h}^{\otimes n-1}) \circ J_n^{-1}}_{\mu_1} \circ \bigwedge\Psi_{W_n}(b)\big).
$$
Where :
\begin{align*}
\mu_1(f_{i_1} \wedge \ldots \wedge f_{i_p}) &= J_n \circ (id_{V} \otimes \tilde{h}^{\otimes n-1})(e_0 \otimes \ldots \otimes e_0 \otimes \underbrace{e_1}_{i_1^{th} \text{ position}} \otimes e_0 \otimes \ldots \otimes \e_0 \otimes \underbrace{e_1}_{i_2^{th} \text{ position}} \otimes \ldots ) \\
&=(-1)^{\# \{ k \in \{ 2, \ldots , n \} | i_k = 1 \}} f_{i_1} \wedge \ldots \wedge f_{i_p} \\
&=(-1)^{\# \{ k \in \{ 2, \ldots , n \} | f_k \text{ appears} \}} f_{i_1} \wedge \ldots \wedge f_{i_p} .
\end{align*}\\
\\
Therefore, $$\Delta_L(t_0) \overset{\bullet}{=}\frac{1}{2} ~ \sum_{\begin{array}{c} 
\scriptstyle 1 \leq i_1 < \ldots < i_p \leq n 
\end{array}} \Bigg( (-1)^{\# \{ k \in \{ 2, \ldots , n \} | f_k \text{ appears} \}} (f_{i_1} \wedge \ldots \wedge f_{i_p})^*(\bigwedge \Psi_{W_n}(b)(f_{i_1} \wedge \ldots \wedge f_{i_p})) \Bigg).$$
But $F$ and $\Psi_{W_n}$ are identical once you change $t_0$ into $t_0^{-1}$. That way we can identify the first factor of our product with $\Delta_L(t_0^{-1})$. But the Alexander polynomial is symmetric : $\Delta_L(t_0)\overset{\bullet}{=}\Delta_L(t_0^{-1})$ \cite{ToFo}. So the only remaining problem is to identify the second sum with the Alexander invariant to be able to conclude. To do that we have to modify the representation of $V^{\otimes n}$ we used up to now to define $\Delta_L(t_0)$, and especially R-matrix $R_1$ we introduced at the beginning.

\begin{lem}
We can slightly modify R-matrix $R_1$ so that the new representations $\rho_{V^{\otimes{n}}}$ of the braid groups we obtain that way still verify :
$$
\Delta_L(t_0) \overset{\bullet}{=}\frac{1}{2} ~ trace((id_{V} \otimes \tilde{h}^{\otimes n-1}) \circ \rho_{V^{\otimes{n}}}(b)).
$$
\end{lem}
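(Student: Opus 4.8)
The plan is to restrict attention to a very controlled class of alterations of $R_1$, each of which affects Ohtsuki's trace computation only by a unit. Concretely, the modifications I allow are: (i) conjugation $R_1 \rightsquigarrow (D\otimes D)\,R_1\,(D\otimes D)^{-1}$ by an invertible diagonal $D\in GL(V)$; (ii) multiplication by a monomial $\lambda$ in $t_0^{\pm 1/2}$ (a unit of $\mathbb{C}[t_0^{\pm 1/2}]$); and (iii) application, entrywise, of the ring automorphism $\phi$ of $\mathbb{K}$ fixing $\mathbb{C}$ with $\phi(t^{1/2})=-t^{1/2}$. The modification actually used in the next subsection — the one turning $b_R^n$ into a representation built from the Burau copy $G$ rather than from the plain Burau representation — will be a composite of operations of these three types, so it is enough to check that each of (i)--(iii) again produces an R-matrix whose associated trace still equals $\Delta_L(t_0)$ up to a unit.

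First I would check that (i)--(iii) preserve the Yang--Baxter equation, hence yield R-matrices and braid group representations $\rho_{V^{\otimes n}}$. For (i), since the operators $D\otimes D\otimes id_V$ and $id_V\otimes D\otimes D$ are both restrictions of $D^{\otimes 3}$, the new $\widetilde R$ satisfies $\widetilde R_{12}=D^{\otimes 3}R_{12}D^{-\otimes 3}$ and $\widetilde R_{23}=D^{\otimes 3}R_{23}D^{-\otimes 3}$, whence $\widetilde R_{12}\widetilde R_{23}\widetilde R_{12}=D^{\otimes 3}(R_{12}R_{23}R_{12})D^{-\otimes 3}=D^{\otimes 3}(R_{23}R_{12}R_{23})D^{-\otimes 3}=\widetilde R_{23}\widetilde R_{12}\widetilde R_{23}$. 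For (ii) the Yang--Baxter identity is merely multiplied by $\lambda^3$ on each side, and for (iii) it is the image under the ring homomorphism $\phi$ of the identity satisfied by $R_1$.

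Next I would verify the trace formula survives. In case (i) one gets $\rho_{V^{\otimes n}}(\sigma_i)=D^{\otimes n}\,\Psi_{V^{\otimes n}}(\sigma_i)\,D^{-\otimes n}$ and hence $\rho_{V^{\otimes n}}(b)=D^{\otimes n}\,\Psi_{V^{\otimes n}}(b)\,D^{-\otimes n}$ for all $b\in B_n$; as $D$ is diagonal it commutes with $\tilde h=\mathrm{diag}(1,-1)$, so $D^{-\otimes n}(id_V\otimes \tilde h^{\otimes n-1})D^{\otimes n}=id_V\otimes \tilde h^{\otimes n-1}$, and cyclicity of the trace gives $trace\big((id_V\otimes \tilde h^{\otimes n-1})\rho_{V^{\otimes n}}(b)\big)=trace\big((id_V\otimes \tilde h^{\otimes n-1})\Psi_{V^{\otimes n}}(b)\big)$, which is $2\,\Delta_L(t_0)$ up to a unit by the displayed formula preceding the lemma. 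In case (ii), $\rho_{V^{\otimes n}}(b)=\lambda^{e(b)}\Psi_{V^{\otimes n}}(b)$ with $e(b)$ the exponent sum of $b$, so the trace is scaled by $\lambda^{e(b)}$, a unit of $\mathbb{C}[t_0^{\pm 1}]$. In case (iii), applying $\phi$ commutes with matrix multiplication and fixes $\tilde h$, so $trace\big((id_V\otimes \tilde h^{\otimes n-1})\rho_{V^{\otimes n}}(b)\big)=\phi\big(trace((id_V\otimes \tilde h^{\otimes n-1})\Psi_{V^{\otimes n}}(b))\big)$; since the Conway polynomial has pure parity, $\Delta_L(t_0)$ changes only by the sign $(-1)^{\mu-1}$ under $t_0^{1/2}\mapsto -t_0^{1/2}$, and $\phi$ sends $\mathbb{C}[t_0^{\pm 1}]$-units to $\mathbb{C}[t_0^{\pm 1}]$-units, so again nothing changes up to a unit. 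Composing the three cases finishes the proof.

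The only genuinely delicate point is the interplay of the two constraints on the conjugator in (i): Yang--Baxter invariance forces it to have the split form $D\otimes D$ with the \emph{same} $D$ on both tensor factors, while compatibility with the enhancement $\tilde h$ forces $D$ to commute with $\mathrm{diag}(1,-1)$; both hold exactly when $D$ is diagonal, and this is precisely the amount of freedom that will be needed in the next subsection to recast $R_1$ as an R-matrix built from $G$. A minor bookkeeping caveat is that the factor $\lambda^{e(b)}$ from (ii) depends on the chosen braid word, not just on $L$; but since $\lambda$ is a monomial, this factor is always a unit of $\mathbb{C}[t_0^{\pm 1}]$ and is therefore harmless for $\overset{\bullet}{=}$.
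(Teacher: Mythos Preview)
Your framework is clean, but it does not cover the modification the paper actually needs. In the paper, the new R-matrix is
\[
R_3=\begin{pmatrix}-t_0&0&0&0\\0&0&-t_0^{1/2}&0\\0&-t_0^{1/2}&1-t_0&0\\0&0&0&1\end{pmatrix},
\]
obtained from $R_1$ by first scaling by $-t_0^{-1}$ and then applying the ring automorphism $t_0^{1/2}\mapsto t_0^{-1/2}$ (i.e.\ $t_0\mapsto t_0^{-1}$), the second step being justified by the symmetry $\Delta_L(t_0)\overset{\bullet}{=}\Delta_L(t_0^{-1})$. Your move (iii) is a \emph{different} automorphism, $t_0^{1/2}\mapsto -t_0^{1/2}$, which fixes $t_0$ and hence fixes all diagonal entries of $R_1$. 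Combined with your other moves it cannot reach $R_3$: conjugation by a diagonal $D\otimes D$ leaves $R_1$ literally unchanged (the only off-diagonal entries sit in positions $(2,3)$ and $(3,2)$, both conjugated by the same scalar $ab$), and scaling by $\lambda$ sends the diagonal $(1,0,1-t_0,-t_0)$ to $(\lambda,0,\lambda(1-t_0),-\lambda t_0)$, which equals the diagonal $(-t_0,0,1-t_0,1)$ of $R_3$ only if simultaneously $\lambda=-t_0$ and $\lambda=1$. So no composite of (i)--(iii) produces $R_3$, and your claim that ``the modification actually used in the next subsection \dots\ will be a composite of operations of these three types'' is false.

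The repair is immediate: replace your (iii) by the inversion $\psi:t_0^{1/2}\mapsto t_0^{-1/2}$. This is still a ring automorphism of $\mathbb{K}$, so it preserves the Yang--Baxter relation and commutes with matrix products; it sends $\tilde h$ to itself; and it sends the trace to $\psi$ of the old trace, which equals $\Delta_L(t_0^{-1})\overset{\bullet}{=}\Delta_L(t_0)$ by the Fox--Torres symmetry. Then (ii) with $\lambda=-t_0^{-1}$ followed by this corrected (iii) gives exactly the paper's $R_3$, and your argument goes through. As written, however, the proof has a genuine gap: the three moves you allow do not reach the R-matrix that intertwines with $\bigwedge G$, so the lemma is not established for the $\rho_{V^{\otimes n}}$ that is used afterwards. (Your move (i), incidentally, contributes nothing here and can be dropped.)
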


\begin{proof}
For the moment, we can write : $\Delta_L(t_0) \overset{\bullet}{=}\frac{1}{2} ~ trace((id_{V} \otimes \tilde{h}^{\otimes n-1}) \circ \Psi_{V^{\otimes{n}}}(b))$, where $\Psi_{V^{\otimes n}}$ is the representation associated to R-matrix $R_1 = \begin{pmatrix}
   1 & 0 & 0 & 0 \\
   0 & 0 & t_0^{1/2} & 0 \\
   0 & t_0^{1/2} & 1-t_0 & 0 \\
   0 & 0 & 0 & -t_0
\end{pmatrix}$.\\
\\
We can replace $R_1$ by $R_2 = -t_0^{-1} R_1 = \begin{pmatrix}
   -t_0^{-1} & 0 & 0 & 0 \\
   0 & 0 & -t_0^{-1/2} & 0 \\
   0 & -t_0^{-1/2} & 1-t_0^{-1} & 0 \\
   0 & 0 & 0 & 1
\end{pmatrix}$ in the definition of $\Psi_{V^{\otimes n}}$, and we will still have $\Delta_L(t_0) \overset{\bullet}{=}\frac{1}{2} ~ trace((id_{V} \otimes \tilde{h}^{\otimes n-1}) \circ \Psi_{V^{\otimes{n}}}(b))$. At last, we replace $t_0$ by $t_0^{-1}$ in $R_2$ to obtain $R_3 = \begin{pmatrix}
   -t_0 & 0 & 0 & 0 \\
   0 & 0 & -t_0^{1/2} & 0 \\
   0 & -t_0^{1/2} & 1-t_0 & 0 \\
   0 & 0 & 0 & 1
\end{pmatrix}$. We define the representation of $B_n$ associated with $R_3$ :
$$
\rho_{V^{\otimes n}}(\sigma_i) = id_V^{\otimes i-1} \otimes R_3 \otimes id_V^{\otimes n-i-1}.
$$
Since the Alexander polynomial is symmetric, we have the following expression for $\Delta_L(t_0)$, that will help us to conclude :
$$
\Delta_L(t_0) \overset{\bullet}{=}\frac{1}{2} ~ trace((id_{V} \otimes \tilde{h}^{\otimes n-1}) \circ \rho_{V^{\otimes{n}}}(b)).
$$
\end{proof}

Using the same strategy as previously, we wish to find $K_n$ : $V^{\otimes n}$ $\longrightarrow$ $\bigwedge W_n$ such that for any $b \in B_n$ : $\bigwedge G(b) \circ K_n = K_n \circ \rho_{V^{\otimes n}}(b)$.

\begin{prop}
We set $
K_1 = \left\{
    \begin{array}{lllll}
     V & \longrightarrow & \bigwedge W_1  \\
        e_1 & \longmapsto & 1 \\
        e_0 & \longmapsto & g_1 \\
    \end{array}
\right. 
$ and, for $n \geqslant 2$, $$
K_n = \left\{
    \begin{array}{cllll}
     V^{\otimes n} & \longrightarrow & \bigwedge W_n \\
        e_{i_1} \otimes \ldots \otimes e_{i_{n-1}} \otimes e_0 & \longmapsto & K_{n-1}(e_{i_1} \otimes \ldots \otimes e_{i_{n-1}}) \wedge g_n \\
        e_{i_1} \otimes \ldots \otimes e_{i_{n-1}} \otimes e_1 & \longmapsto & K_{n-1}(e_{i_1} \otimes \ldots \otimes e_{i_{n-1}}) \\
    \end{array}
\right. .
$$
Then, for any $b \in B_n$ : $\bigwedge G(b) \circ K_n = K_n \circ \rho_{V^{\otimes n}}(b)$.

\end{prop}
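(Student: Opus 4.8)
The plan is to copy the argument of Proposition~\ref{commutation}: set up an induction on $n$ and reduce the identity to a check on the Artin generators. First I would unwind the inductive definition of $K_n$ to obtain the closed form
$$
K_n(e_{i_1}\otimes\cdots\otimes e_{i_n}) = \underset{k \text{ : } i_k = 0}{\bigwedge} g_k ,
$$
which makes it plain that $K_n$ carries the standard basis of $V^{\otimes n}$ bijectively onto the reference basis of $\bigwedge W_n$, so that $K_n$ is a linear automorphism. Because $\rho_{V^{\otimes n}}$ and $b\mapsto\bigwedge G(b)$ are both group homomorphisms from $B_n$, it then suffices to prove
$$
\bigwedge G(\sigma_i)\circ K_n = K_n\circ\rho_{V^{\otimes n}}(\sigma_i)\qquad\text{for }1\le i\le n-1,
$$
and I would do this by induction on $n$, the case $n=1$ being vacuous.

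For $i\le n-2$ I expect the relation to drop out of the induction hypothesis. Write a basis vector of $V^{\otimes n}=V^{\otimes n-1}\otimes V$ as $x\otimes e_b$ with $b\in\{0,1\}$. Then $\rho_{V^{\otimes n}}(\sigma_i)$ acts only on the first $n-1$ factors, as $\rho_{V^{\otimes n-1}}(\sigma_i)$; the map $G(\sigma_i)$ fixes $g_n$; and by construction $K_n(x\otimes e_0)=K_{n-1}(x)\wedge g_n$ and $K_n(x\otimes e_1)=K_{n-1}(x)$. Since $\bigwedge G(\sigma_i)$ is an algebra endomorphism of $\bigwedge W_n$ fixing $g_n$, it commutes with the operation $y\mapsto y\wedge g_n$, and the claimed identity collapses to the $B_{n-1}$ statement $\bigwedge G(\sigma_i)\circ K_{n-1}=K_{n-1}\circ\rho_{V^{\otimes n-1}}(\sigma_i)$, which is the induction hypothesis. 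The only point to watch is that no sign appears here; this is fine because $g_n$ is always wedged on the far right and is left fixed by $G(\sigma_i)$.

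The one genuine computation is the case $i=n-1$, where $\rho_{V^{\otimes n}}(\sigma_{n-1})=\mathrm{id}_V^{\otimes n-2}\otimes R_3$ touches only the last two tensor slots. I would fix a basis vector $x\in V^{\otimes n-2}$, put $u:=K_{n-2}(x)\in\bigwedge\langle g_1,\dots,g_{n-2}\rangle$, and evaluate both sides on the four vectors $x\otimes e_a\otimes e_b$, $a,b\in\{0,1\}$, which $K_n$ sends to $u$, $u\wedge g_{n-1}$, $u\wedge g_n$ and $u\wedge g_{n-1}\wedge g_n$ respectively. Since $G(\sigma_{n-1})$ fixes $g_1,\dots,g_{n-2}$, the map $\bigwedge G(\sigma_{n-1})$ fixes $u$ and, using $g_{n-1}\mapsto -t_0^{1/2}g_n$, $g_n\mapsto -t_0^{1/2}g_{n-1}+(1-t_0)g_n$ together with $g_n\wedge g_n=0$ and $g_n\wedge g_{n-1}=-g_{n-1}\wedge g_n$, one finds
\begin{align*}
u &\longmapsto u,\\
u\wedge g_{n-1} &\longmapsto -t_0^{1/2}\,(u\wedge g_n),\\
u\wedge g_n &\longmapsto -t_0^{1/2}\,(u\wedge g_{n-1})+(1-t_0)\,(u\wedge g_n),\\
u\wedge g_{n-1}\wedge g_n &\longmapsto -t_0\,(u\wedge g_{n-1}\wedge g_n).
\end{align*}
Reading $R_3$ on $V\otimes V$ in the basis $(e_1\otimes e_1,\,e_0\otimes e_1,\,e_1\otimes e_0,\,e_0\otimes e_0)$, which corresponds under $K_n$ to $(u,\,u\wedge g_{n-1},\,u\wedge g_n,\,u\wedge g_{n-1}\wedge g_n)$, one checks that its matrix is exactly the one above; this gives the intertwining relation on these four vectors, hence on all of $V^{\otimes n}$, and closes the induction.

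The difficulty, to the extent there is one, is entirely in the exterior-algebra sign bookkeeping: getting the reordering $g_n\wedge g_{n-1}=-g_{n-1}\wedge g_n$ right in the case $a=b=0$, and verifying that the reduction for $i\le n-2$ really is sign-free. The rest is formal, and the explicit computations can be deferred to the computational appendix, exactly as was done for $I_n$ in Proposition~\ref{commutation}.
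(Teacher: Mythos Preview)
Your proposal is correct and follows exactly the route the paper indicates: the paper's own proof reads ``We leave it to the reader to verify that a proof by induction resembling the one we did with $I_n$ concludes,'' and your induction on $n$, reduction to the Artin generators, inductive handling of $\sigma_i$ for $i\le n-2$, and four-case check for $\sigma_{n-1}$ is precisely that. Your observation that the $i\le n-2$ step is sign-free (because $K_n$ needs no $Reord$, unlike $I_n$) is right and is in fact a simplification relative to the appendix computation for $I_n$.
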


\begin{proof}
We leave it to the reader to verify that a proof by induction resembling the one we did with $I_n$ concludes.
\end{proof}

That way, \begin{align*}
\Delta_L(t_0) &\overset{\bullet}{=}\frac{1}{2} ~ trace((id_{V} \otimes \tilde{h}^{\otimes n-1}) \circ \rho_{V^{\otimes{n}}}(b)) \\
&= \frac{1}{2} ~ trace(\underbrace{K_n \circ (id_V \otimes \tilde{h}^{\otimes n-1}) \circ K_n^{-1}}_{\nu} \circ \bigwedge G(b)) .
\end{align*}

We can set $K_n(e_{i_1} \otimes \ldots \otimes e_{i_n})= g_{j_1} \wedge \ldots \wedge g_{j_m}$. That allows us to explicit the values of $\nu$ on the natural basis of $\bigwedge W_n$.
\begin{align*}
\nu(g_{j_1} \wedge \ldots \wedge g_{j_m})&= K_n \circ id_V \otimes \tilde{h}^{\otimes n-1} (e_{i_1} \otimes \ldots \otimes e_{i_n}) \\
&= (-1)^{\# \{ k \in \{ 2, \ldots , n \} | i_k = 1 \} } K_n(e_{i_1} \otimes \ldots \otimes e_{i_n}) \\
&= (-1)^{n-1} (-1)^{\# \{ k \in \{ 2, \ldots , n \} | i_k = 0 \} } g_{j_1} \wedge \ldots \wedge g_{j_m} \\
&= (-1)^{n-1} (-1)^{\# \{ k \in \{ 2, \ldots , n \} | g_k \text{ appears} \}} g_{j_1} \wedge \ldots \wedge g_{j_m} .
\end{align*}

So :
\begin{align*}
\Delta_L(t_0) &= \frac{1}{2} ~ trace(\nu \circ \bigwedge G(b)) \\
&\overset{\bullet}{=}\frac{1}{2} ~ \sum_{\begin{array}{c} 
\scriptstyle 1 \leq j_1 < \ldots < j_m \leq n 
\end{array}} \Bigg( (-1)^{\# \{ k \in \{ 2, \ldots , n \} | g_k \text{ appears} \}} (g_{j_1} \wedge \ldots \wedge g_{j_m})^*(\bigwedge G(b)(g_{j_1} \wedge \ldots \wedge g_{j_m})) \Bigg).
\end{align*}

$
\text{And finally } LG(L; t_0, t_0^{-1}) \overset{\bullet}{=} \Delta_L(t_0)^2
$ for any link $L$.

%
%
\section{Generalizing the proof}

\subsection{Writing the conjecture in case $(n,1)$ and other considerations}

The completely general conjecture states, using variables $(\tau, q)$ :
$$
LG^{m,n}( L ; \tau , e^{i \pi / n}) = \Delta_L(\tau^{2 n})^{m} \text{ , for any link } L \text{.}
$$
We can rewrite it using variables $(t_0,t_1)$. Indeed, since $q = e^{i \pi / n}$, the variables are related by $t_1^{1/2} = \tau^{-1} e^{i \pi / n}$ and $ t_0^{1/2}=\tau$. Therefore, the conjecture can be expressed the following way :
$$
LG^{m,n}( L ; t_0 , e^{2 i \pi / n} t_0^{-1}) = \Delta_L(t_0^n)^{m} \text{ , for any link } L \text{.}
$$
We can now explore in which cases it seems reasonable to attempt to generalize the strategy we used to evaluate the reduction of $LG^{2,1}$. An obvious obstruction to that concerns the dimension of both representations we built and showed they were isomorphic. Let's calculate the dimensions of the natural generalizations of these representations in case $(m,n)$. 

The vector space corresponding to what we denoted $W$ is the highest weight $U_q(gl(m|n))$-module used to define $LG^{m,n}$. It is $2^{nm}$-dimensional. So the representation of braid group $B_p$ defined thanks to the corresponding R-matrix is $2^{nmp}$-dimensional. On the other hand, the representation of $B_p$ we want to define to produce $\Delta_L(t_0^n)^{m}$ is :
$$
\bigwedge\underbrace{(W_p \oplus \ldots \oplus W_p)}_{m\text{ times}} 
$$
where each $W_p$ is a $\mathbb{C}[B_p]$-module isomorphic to a version of $\Psi_{W_p}$ where $t_0$ is replaced by $t_0^{\pm n}$. Such a representation is $2^{mp}$-dimensional. These two representations can not be isomorphic if $n>1$.

That is why a straightforward use of our method can only be applied to prove cases $(m,1)$. 

\subsection{Proof of case $(3,1)$}

We give the essential steps to prove the result that interests us in the case $(m,n) = (3,1)$. We follow the same ideas we used to study $LG^{2,1}$.

\begin{thm}
For any oriented link L, $LG^{3,1}(L; t_0, t_0^{-1}) \overset{\bullet}{=} \Delta_L(t_0)^3$.
\end{thm}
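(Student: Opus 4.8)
The plan is to run, line for line, the argument of Sections~2 and~3, with the four\-dimensional highest weight $U_q(gl(2|1))$\-module $W$ replaced by the eight\-dimensional highest weight $U_q(gl(3|1))$\-module, and the rank\-two object $W_n\oplus W_n$ replaced by the rank\-three object $W_n\oplus W_n\oplus W_n$. The dimension count is favourable: the specialized R\-matrix representation $b_R^n(t_0,t_0^{-1})$ now lives on $W^{\otimes n}$, which is $8^n=2^{3n}$\-dimensional, while $\bigwedge(W_n\oplus W_n\oplus W_n)$ is again $2^{3n}$\-dimensional, so the two can at least hope to be isomorphic as $\mathbb{C}[B_n]$\-modules.

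Concretely, I would first write down the $(3,1)$ R\-matrix from \cite{DeWit} (equivalently \cite{DeWKauLin}), specialize it at $t_1=t_0^{-1}$, and, exactly as in the $n=2$ discussion of Section~2, reorganize the tensor basis of $W^{\otimes 2}$ into weight blocks so that $b_R^2(\sigma_1)$ becomes block diagonal. Reading off those blocks, I would identify three Burau\-type representations $F,G,H$ of $B_n$ on $n$\-dimensional spaces $\langle f_1,\dots,f_n\rangle$, $\langle g_1,\dots,g_n\rangle$, $\langle h_1,\dots,h_n\rangle$ (each isomorphic, after the substitution $t_0\leftrightarrow t_0^{-1}$ and a conjugation of the type used for $G$ in the $(2,1)$ case, to the Burau representation), set $\Psi_n:=\bigwedge(F\oplus G\oplus H)$, and verify on $n=2$ that $\Psi_2(\sigma_1)$ in a suitable wedge basis equals $b_R^2(\sigma_1)$ in the block basis. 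Then I would define $I_n\colon W^{\otimes n}\to\bigwedge(W_n\oplus W_n\oplus W_n)$ inductively, putting the eight basis vectors $e_1,\dots,e_8$ of $W$ in bijection with the eight subsets of $\{f,g,h\}$ recording which of $f_n,g_n,h_n$ is wedged on in the last tensor slot, so that, after the usual reordering,
\[
I_n(e_{i_1}\otimes\cdots\otimes e_{i_n})=\Big(\bigwedge_{k:\,f\in i_k} f_k\Big)\wedge\Big(\bigwedge_{k:\,g\in i_k} g_k\Big)\wedge\Big(\bigwedge_{k:\,h\in i_k} h_k\Big),
\]
and prove the commutation $\Psi_n(b)\circ I_n=I_n\circ b_R^n(b)$ by induction on $n$, exactly as in Proposition~\ref{commutation}.

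Once the isomorphism is established the rest is parallel to Section~3. I would transport the (specialized) $(3,1)$ enhancement $\mu\in\mathrm{End}(W)$ through $I_n$ to a map $\tilde\mu$ and check the analogue of Lemma~\ref{mutilde}: that $\tilde\mu$ is diagonal on the reference basis, with eigenvalue $t_0^{-(n-1)}$ times a sign depending only on how many of the indices $k\in\{2,\dots,n\}$ have $f_k$, $g_k$ or $h_k$ present. Since $\Psi_n(b)$ preserves the tripartite wedge decomposition, acting as $\bigwedge F(b)\wedge\bigwedge G(b)\wedge\bigwedge H(b)$ on wedge monomials, the formula $LG^{3,1}(L;t_0,t_0^{-1})=\tfrac18\,\mathrm{trace}\big((\mathrm{id}_W\otimes\mu^{\otimes n-1})\circ b_R^n(b)\big)=\tfrac18\,\mathrm{trace}(\tilde\mu\circ\Psi_n(b))$ factors as a product of three sums, one over increasing multi\-indices in each copy of $W_n$; and each of these sums is, by the $\bigwedge$\-of\-Burau presentation of the Alexander polynomial from Section~3 (the maps $J_n$, $K_n$ and the rescaled R\-matrices $R_2,R_3,\dots$) together with the symmetry $\Delta_L(t_0)\overset{\bullet}{=}\Delta_L(t_0^{-1})$, equal to $\Delta_L(t_0)$ up to a unit of $\mathbb{C}[t_0^{\pm1}]$. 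Multiplying the three factors gives $LG^{3,1}(L;t_0,t_0^{-1})\overset{\bullet}{=}\Delta_L(t_0)^3$, as claimed.

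The step I expect to be the real obstacle is the $n=2$ base case of the commutation: checking that the $64\times 64$ specialized $(3,1)$ R\-matrix, rewritten in the weight\-block basis, is genuinely block diagonal and coincides with the exterior\-power matrix of $F\oplus G\oplus H$ — and, upstream of that, correctly guessing the three Burau\-type representations $F,G,H$ and the conjugations relating them to $\Psi_{W_n}$. As in the $(2,1)$ case this is a finite but sizeable computation, best done with machine assistance; everything afterwards — the inductive step, the computation of $\tilde\mu$, the splitting of the trace, and the identification of each factor with $\Delta_L(t_0)$ — is a routine transcription of Sections~2 and~3 with three copies in place of two. A secondary bookkeeping point is to track which copies realize Burau at $t_0$ and which at $t_0^{-1}$, and the corresponding rescaled R\-matrices needed to present each factor as $\Delta_L(t_0)$; the symmetry of the Alexander polynomial absorbs all such discrepancies.
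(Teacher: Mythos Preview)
Your proposal is correct and follows essentially the same route as the paper. Two minor deviations in the paper's execution are worth noting: the intertwiner $I_n$ is built by wedging on $f_n,g_n,h_n$ according to the \emph{first} tensor factor rather than the last, so the commutation relation becomes $\Phi_n(b)\circ I_n=I_n\circ b_S^n(\hat b)$ with the mirror braid $\hat b=\sigma_{n-i_1}^{\varepsilon_1}\cdots\sigma_{n-i_p}^{\varepsilon_p}$ (harmless, since $\hat b$ and $b$ have the same closure); and the three Burau copies $F,G,H$ in fact come out identical (all at $t_0^{-1}$, so no conjugations of the $(2,1)$ type are needed), with $\tilde\mu$ carrying the scalar $t_0^{3(n-1)/2}$ and sign indices running over $\{1,\dots,n-1\}$ rather than $\{2,\dots,n\}$.
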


For an explicit definition of $LG^{3,1}$, see \cite{DeWit}, p.17. The author uses variables $(\tau, q)$, but denotes $\tau = q^{- \alpha}$. We will only use the reduced version of $LG^{3,1}$. It is obtained by setting $q=-1$ and $q^{- \alpha} = t_0^{1/2}$.

\begin{rem}
Since we are going to set $q=-1$ in the R-matrix of \cite{DeWit}, we have to chose precisely what the roots that are written formally are. We have chosen : $[\alpha +1]^{1/2} = q^{-1/2} [\alpha]^{1/2}$ and $[\alpha +2]^{1/2} = - [\alpha]^{1/2}$.
\end{rem}

\begin{defn}
(\textit{R-matrix $S$}) \\
Set $\mathbb{F} := \mathbb{C}(t_0^{\pm 1/2})$. Let $W = <e_1, \ldots, e_8>$ be a $8$-dimensional $\mathbb{F}$-vector space. We define $S$ an automorphism of $W \otimes W$ as the direct sum of the following automorphisms ($S$ is globally multiplied by $t_0^{-3/2}$ in comparison with the R-matrix explicited in \cite{DeWit}) :
$$\begin{pmatrix}
1& . & . & . & . & . & . & . \\
. & -t_0^{-1} & . & . & . & . & . & . \\
. & . & -t_0^{-1} & . & . & . & . & . \\
. & . & . & -t_0^{-1} & . & . & . & . \\
. & . & . & . & t_0^{-2} & . & . & . \\
. & . & . & . & . & t_0^{-2} & . & . \\
. & . & . & . & . & . & t_0^{-2} & . \\
. & . & . & . & . & . & . & -t_0^{-3} \\
\end{pmatrix}
$$
in basis $(e_1 \otimes e_1,e_2 \otimes e_2,e_3 \otimes e_3,e_4 \otimes e_4,e_5 \otimes e_5,e_6 \otimes e_6,e_7 \otimes e_7,e_8 \otimes e_8 )$ ;\\
\\
several copies of 
$$
\begin{pmatrix}
0 & t_0^{-1/2} \\
t_0^{-1/2} & 1-t_0^{-1}\\
\end{pmatrix}
$$
in bases $(e_1 \otimes e_2 , e_2 \otimes e_1)$, $(e_1 \otimes e_3 , e_3 \otimes e_1)$ and $(e_1 \otimes e_4 , e_4 \otimes e_1)$ ; \\
\\
several copies of
$$ t_0^{-2}
\begin{pmatrix}
0 & t_0^{-1/2} \\
t_0^{-1/2} & 1-t_0^{-1}\\
\end{pmatrix}
$$
in bases $(e_7 \otimes e_8 , e_8 \otimes e_7)$, $(e_6 \otimes e_8 , e_8 \otimes e_6)$ and $(e_5 \otimes e_8 , e_8 \otimes e_5)$ ;\\
\\
several copies of
$$ -t_0^{-1}
\begin{pmatrix}
0 & t_0^{-1/2} \\
t_0^{-1/2} & 1-t_0^{-1}\\
\end{pmatrix}
$$
in bases $(e_2 \otimes e_5 , e_5 \otimes e_2)$, $(e_3 \otimes e_5 , e_5 \otimes e_3)$, $(e_2 \otimes e_6 , e_6 \otimes e_2)$, $(e_4 \otimes e_6 , e_6 \otimes e_4)$, $(e_3 \otimes e_7 , e_7 \otimes e_3)$ and $(e_4 \otimes e_7 , e_7 \otimes e_4)$ ; \\
\\
several copies of 
$$ t_0^{-1}
\begin{pmatrix}
. & . & . & 1 \\
. & . & 1 & t_0^{1/2}-t_0^{-1/2} \\ 
. & 1 & . & t_0^{1/2}-t_0^{-1/2} \\
1 & t_0^{1/2}-t_0^{-1/2} & t_0^{1/2}-t_0^{-1/2} & (t_0^{1/2}-t_0^{-1/2})^2 \\
\end{pmatrix}
$$
in bases $(e_1 \otimes e_5,e_2 \otimes e_3, e_3 \otimes e_2, e_5 \otimes e_1)$, $(e_1 \otimes e_6,e_2 \otimes e_4, e_4 \otimes e_2, e_6 \otimes e_1)$ and $(e_1 \otimes e_7,e_3 \otimes e_4, e_4 \otimes e_3, e_7 \otimes e_1)$ ; \\
\\
several copies of 
$$ -t_0^{-2}
\begin{pmatrix}
. & . & . & 1 \\
. & . & 1 & t_0^{1/2}-t_0^{-1/2} \\ 
. & 1 & . & t_0^{1/2}-t_0^{-1/2} \\
1 & t_0^{1/2}-t_0^{-1/2} & t_0^{1/2}-t_0^{-1/2} & (t_0^{1/2}-t_0^{-1/2})^2 \\
\end{pmatrix}
$$
in bases $(e_4 \otimes e_8,e_6 \otimes e_7, e_7 \otimes e_6, e_8 \otimes e_4)$, $(e_3 \otimes e_8,e_5 \otimes e_7, e_7 \otimes e_5, e_8 \otimes e_3)$ and $(e_2 \otimes e_8,e_5 \otimes e_6, e_6 \otimes e_5, e_8 \otimes e_2)$ ; \\
\\
\footnotesize
$$ t_0^{-3/2}
\begin{pmatrix}
. & . & . & . & . & . & . & 1 \\
. & . & . & . & . & . & 1 & t_0^{1/2}-t_0^{-1/2} \\
. & . & . & . & . & 1& . & t_0^{1/2}-t_0^{-1/2}\\
. & . & . & . & 1& . & . & t_0^{1/2}-t_0^{-1/2}\\
. & . & . & 1 & . & t_0^{1/2}-t_0^{-1/2}& t_0^{1/2}-t_0^{-1/2}& (t_0^{1/2}-t_0^{-1/2})^2\\
. & . & 1& . & t_0^{1/2}-t_0^{-1/2}& . & t_0^{1/2}-t_0^{-1/2}& (t_0^{1/2}-t_0^{-1/2})^2\\
. & 1 & . & . & t_0^{1/2}-t_0^{-1/2}& t_0^{1/2}-t_0^{-1/2}& . & (t_0^{1/2}-t_0^{-1/2})^2\\
 1&t_0^{1/2}-t_0^{-1/2} &t_0^{1/2}-t_0^{-1/2} &t_0^{1/2}-t_0^{-1/2} &(t_0^{1/2}-t_0^{-1/2})^2 &(t_0^{1/2}-t_0^{-1/2})^2 &(t_0^{1/2}-t_0^{-1/2})^2 & (t_0^{1/2}-t_0^{-1/2})^3\\ 
\end{pmatrix}$$
\normalsize
in basis $(e_1 \otimes e_8,e_4 \otimes e_5, e_3 \otimes e_6, e_2 \otimes e_7,e_7 \otimes e_2,e_6 \otimes e_3, e_5 \otimes e_4, e_8 \otimes e_1)$. \\
\\
Then $S$ is an R-matrix. So we can denote by $b_S^n$ the representation of braid group $B_n$ derived from $S$. It is given by the usual expression :
$$
b_S^n(\sigma_i) = id_{W}^{\otimes{i-1}} \otimes S \otimes id_{W}^{\otimes{n-i-1}} \text{ , } i = 1, \ldots, n-1 .
$$
\end{defn}

\begin{defn}
\textit{Reduced Links-Gould invariant $LG^{3,1}$} \\
Let $L$ be any oriented link, and $b \in B_n$ be a braid with closure $L$. The reduced version of Links-Gould invariant $LG^{3,1}$ is given by the following formula :
$$
LG^{3,1}(L ; t_0 , t_0^{-1}) = \frac{1}{8} ~ trace( (id_W \otimes \mu^{\otimes n-1}) \circ b_S^n(b))
$$
where $$\mu = t_0^{3/2} \begin{pmatrix}
 1 & . & . & . & . & . & . & . \\
 . & -1 & . & . & . & . & . & . \\
 . & . & -1 & . & . & . & . & . \\
 . &.  &.  & -1 & . & . & . & . \\
 . & . & . & . & 1 & . & . & . \\
 . & . & . & . & . & 1 & . & . \\
 . & . & . & . & . & . & 1 & . \\
 . & . & . & . & . & . & . & -1 \\ 
\end{pmatrix}~ \in End(W).$$
\end{defn}

We set three $n$-dimensional vector spaces $<f_1, \ldots , f_n>$, $<g_1, \ldots , g_n>$ and $<h_1, \ldots , h_n>$ that will be all refered to as $W_n$. On each of them, we define a representation isomorphic to the Burau representation :
$$
F(\sigma_i)(f_j) = \left\{
    \begin{array}{lll}
     t_0^{-1/2} f_{i+1} & \mbox{if } j=i ~, \\
        t_0^{-1/2} f_i + (1-t_0^{-1}) f_{i+1}& \mbox{if }j=i+1 ~, \\
        f_j & \mbox{otherwise}.
    \end{array}
\right.
$$
We designate by $G$ and $H$ representations on $<g_1, \ldots , g_n>$ and $<h_1, \ldots , h_n>$ defined by the exact same formula. Then we set $\Phi_n$ the representation of $B_n$ on $\bigwedge (W_n \oplus W_n \oplus W_n)$ given by :
$$
\Phi_n := \bigwedge(F \oplus G \oplus H).
$$
When $n=2$, one can compute $\Phi_2(\sigma_1)$ and notice that its matrix is equal to $S$ in a well chosen basis. A precise look at this basis gave us the idea to define the following map by induction. Note that retrospectively one can recover this basis simply by computing the image by our map of the basis we used to express $S$ when $n=2$. 
\begin{thm}
We set $
I_1 = \left\{
    \begin{array}{lllll}
     V & \longrightarrow & \bigwedge W_1  \\
        e_1 & \longmapsto & 1 \\
        e_2 & \longmapsto & f_1 \\
        e_3 & \longmapsto & g_1 \\
        e_4 & \longmapsto & h_1 \\
        e_5 & \longmapsto & f_1 \wedge g_1 \\
        e_6 & \longmapsto & f_1 \wedge h_1 \\
        e_7 & \longmapsto & g_1 \wedge h_1 \\
        e_8 & \longmapsto & f_1 \wedge g_1 \wedge h_1 \\
    \end{array}
\right. 
$ and, for $n \geqslant 2$, $$
I_n = \left\{
    \begin{array}{cllll}
     V^{\otimes n} & \longrightarrow & \bigwedge W_n \\
        e_1 \otimes e_{i_{n-1}} \otimes \ldots \otimes e_{i_1} & \longmapsto & I_{n-1}(e_{i_{n-1}} \otimes \ldots \otimes e_{i_1}) \\
        e_2 \otimes e_{i_{n-1}} \otimes \ldots \otimes e_{i_1} & \longmapsto & Reord(I_{n-1}(e_{i_{n-1}} \otimes \ldots \otimes e_{i_1}) \wedge f_n) \\
        e_3 \otimes e_{i_{n-1}} \otimes \ldots \otimes e_{i_1} & \longmapsto & Reord(I_{n-1}(e_{i_{n-1}} \otimes \ldots \otimes e_{i_1}) \wedge g_n) \\
        e_4 \otimes e_{i_{n-1}} \otimes \ldots \otimes e_{i_1} & \longmapsto & I_{n-1}(e_{i_{n-1}} \otimes \ldots \otimes e_{i_1}) \wedge h_n \\
        e_5 \otimes e_{i_{n-1}} \otimes \ldots \otimes e_{i_1} & \longmapsto & Reord(I_{n-1}(e_{i_{n-1}} \otimes \ldots \otimes e_{i_1}) \wedge f_n \wedge g_n) \\
        e_6 \otimes e_{i_{n-1}} \otimes \ldots \otimes e_{i_1} & \longmapsto & Reord(I_{n-1}(e_{i_{n-1}} \otimes \ldots \otimes e_{i_1}) \wedge f_n \wedge h_n) \\
        e_7 \otimes e_{i_{n-1}} \otimes \ldots \otimes e_{i_1} & \longmapsto & Reord(I_{n-1}(e_{i_{n-1}} \otimes \ldots \otimes e_{i_1}) \wedge g_n \wedge h_n) \\
        e_8 \otimes e_{i_{n-1}} \otimes \ldots \otimes e_{i_1} & \longmapsto & Reord(I_{n-1}(e_{i_{n-1}} \otimes \ldots \otimes e_{i_1}) \wedge f_n \wedge g_n \wedge h_n) \\
    \end{array}
\right. .
$$
Then the following identity holds for $n \geqslant 1$ and $b \in B_n$ :
$$
\Phi_n(b) \circ I_n = I_n \circ b_S^n(\hat{b}) .
$$
\end{thm}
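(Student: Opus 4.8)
The plan is to imitate the proof of Proposition~\ref{commutation}: induct on $n$, after first reducing to the Artin generators. Indeed, if the identity $\Phi_n(b)\circ I_n=I_n\circ b_S^n(\hat b)$ holds for two braids $b$ and $b'$, then
\begin{align*}
\Phi_n(bb')\circ I_n &=\Phi_n(b)\circ\Phi_n(b')\circ I_n=\Phi_n(b)\circ I_n\circ b_S^n(\hat b')\\
&=I_n\circ b_S^n(\hat b)\circ b_S^n(\hat b')=I_n\circ b_S^n(\widehat{bb'}),
\end{align*}
since $b\mapsto\hat b$ is a group homomorphism (it is the automorphism of $B_n$ reversing the strand order, $\sigma_i\mapsto\sigma_{n-i}$, which is precisely the discrepancy created by $I_n$ ordering the tensor slots and the copies $f_i,g_i,h_i$ in opposite ways). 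Hence it suffices to treat $b=\sigma_i$ for $i=1,\dots,n-1$, i.e.\ to prove $\Phi_n(\sigma_{n-i})\circ I_n=I_n\circ b_S^n(\sigma_i)$.

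The base cases are $n=1$, where $B_1$ is trivial and both sides equal the identity, and $n=2$, where $\widehat{\sigma_1}=\sigma_1$ and one must check that $\Phi_2(\sigma_1)$, read in the basis of $V^{\otimes 2}$ whose image under $I_2$ is the reference basis of $\bigwedge(W_2\oplus W_2\oplus W_2)$, equals the matrix $S$. This is the single genuinely computational point of the argument: a $64\times64$ identity that, according to the blockwise way $I_2$ groups the basis vectors, decomposes into exactly the $1\times1$, $2\times2$, $4\times4$ and $8\times8$ blocks listed in the definition of $S$. (It is also how the maps $I_n$ were found.)

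For the inductive step, assume the statement at level $n-1$ and split $V^{\otimes n}=V\otimes V^{\otimes(n-1)}$, the first factor being the slot peeled off in the recursion for $I_n$, corresponding to the last copies $f_n,g_n,h_n$. If $i\geq2$, then $b_S^n(\sigma_i)=\mathrm{id}\otimes b_S^{n-1}(\sigma_{i-1})$, while $\Phi_n(\sigma_{n-i})=\bigwedge(F\oplus G\oplus H)(\sigma_{n-i})$ fixes $f_n,g_n,h_n$ (as $\sigma_{n-i}$ mixes only the copies of index $n-i$ and $n-i+1\le n-1$) and restricts on the copies of index $\le n-1$ to the corresponding level-$(n-1)$ data. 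Via the isomorphism $\bigwedge(A\oplus B)\cong\bigwedge A\otimes\bigwedge B$ and the recursive formula $I_n(e_a\otimes w)=Reord(I_{n-1}(w)\wedge W_n(a))$, in which $W_n(a)\in\bigwedge\langle f_n,g_n,h_n\rangle$ is fixed by $\Phi_n(\sigma_{n-i})$, both sides of the claim, evaluated on a basis vector $e_a\otimes w$, reduce to $\bigl(\Phi_{n-1}(\sigma_{n-i})\,I_{n-1}(w)\bigr)\wedge W_n(a)$ and $\bigl(I_{n-1}(b_S^{n-1}(\sigma_{i-1})w)\bigr)\wedge W_n(a)$ respectively; these agree by the inductive hypothesis, since $\widehat{\sigma_{n-i}}=\sigma_{i-1}$ in $B_{n-1}$. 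No sign from $Reord$ intervenes here, because the wedge factor $W_n(a)$ is the same in every term.

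There remains the generator $\sigma_1$ on the $b_S^n$ side, i.e.\ the identity $\Phi_n(\sigma_{n-1})\circ I_n=I_n\circ b_S^n(\sigma_1)$. Here $b_S^n(\sigma_1)=S\otimes\mathrm{id}$ acts on the first two tensor slots, which under $I_n$ correspond to the copies of index $n$ and $n-1$. Iterating the recursion twice, $I_n(x\otimes u)=I_{n-2}(u)\wedge\widetilde{I_2}(x)$ as an element of $\bigwedge(W_n\oplus W_n\oplus W_n)$, for $x\in V^{\otimes2}$ and $u\in V^{\otimes(n-2)}$, where $\widetilde{I_2}$ denotes $I_2$ with $f_1,g_1,h_1,f_2,g_2,h_2$ relabelled as $f_{n-1},g_{n-1},h_{n-1},f_n,g_n,h_n$. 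Since $\sigma_{n-1}$ fixes every copy of index $\le n-2$, $\Phi_n(\sigma_{n-1})$ fixes $I_{n-2}(u)$ and acts on the $\widetilde{I_2}(x)$ part as the relabelled $\Phi_2(\sigma_1)$; the $n=2$ base case then gives $\Phi_2(\sigma_1)\,\widetilde{I_2}(x)=\widetilde{I_2}(Sx)$, and wedging the inert $I_{n-2}(u)$ back on yields
\[
\Phi_n(\sigma_{n-1})\,I_n(x\otimes u)=I_{n-2}(u)\wedge\widetilde{I_2}(Sx)=I_n\bigl((Sx)\otimes u\bigr)=I_n\bigl(b_S^n(\sigma_1)(x\otimes u)\bigr),
\]
which closes the induction. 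I expect the main obstacle to be not any single idea but the bookkeeping: pinning down the strand-reversal automorphism $b\mapsto\hat b$ so that it matches up across the two levels of recursion, tracking which copies each generator touches, and, above all, carrying out cleanly the $n=2$ matrix computation identifying $\Phi_2(\sigma_1)$ with $S$, on which everything else rests.
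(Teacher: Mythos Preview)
Your overall architecture is exactly the paper's: reduce to Artin generators, induct on $n$, use the $n=2$ computation as the anchor, and for the top generator peel off two tensor slots. The paper does not actually write out a proof in the $(3,1)$ case --- it merely asserts that the $n=2$ matrix check succeeds and leaves the induction implicit --- so in that respect you are already doing more than the text.

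There is, however, a real gap in your handling of the $Reord$ signs. You write that ``no sign from $Reord$ intervenes here, because the wedge factor $W_n(a)$ is the same in every term.'' That is not sufficient. Recall that $Reord$ does \emph{not} return the same exterior-algebra element: it returns the reference basis vector with the sign stripped, so $I_n(e_a\otimes w)=\epsilon(w,a)\cdot I_{n-1}(w)\wedge W_n(a)$ where the sign $\epsilon(w,a)$ depends on how many $g$'s and $h$'s occur in $I_{n-1}(w)$ (since $f_n$ must be moved past them, and $g_n$ past the $h$'s). On the right-hand side you get $\sum_j c_j\,\epsilon(w_j,a)\,I_{n-1}(w_j)\wedge W_n(a)$ with $b_S^{n-1}(\sigma_{i-1})w=\sum_j c_j w_j$, and equality with the left requires $\epsilon(w_j,a)=\epsilon(w,a)$ for every $j$. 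The missing ingredient is that $S$ preserves the three separate gradings (the $f$-, $g$-, and $h$-counts); this is visible block by block in the definition of $S$ and is precisely the $(3,1)$ analogue of the observation in the appendix that ``every term that appears in $b_R^{n-1}(\sigma_k)(\ldots)$ has the same total number of $e_1$ and $e_3$.'' Once you state and use this grading preservation, both your $i\ge2$ step and your $\sigma_1$ step (where the same issue hides inside the claimed identity $I_n(x\otimes u)=I_{n-2}(u)\wedge\widetilde{I_2}(x)$, which is again only true up to a sign depending on $u$) go through.
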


\begin{rem}
As in the previous sections, $Reord$ refers to a reference basis of $\bigwedge (W_n \oplus W_n \oplus W_n)$ that is :$$
(f_{i_1} \wedge \ldots \wedge f_{i_p} \wedge g_{j_1} \wedge \ldots \wedge g_{j_m} \wedge h_{k_1} \wedge \ldots \wedge h_{k_q})_{1 \leq i_1 < \ldots < i_p \leq n \text{, } 
1 \leq j_1 < \ldots < j_m \leq n \text{, } 
1 \leq k_1 < \ldots < k_q \leq n}
$$
\end{rem}

\begin{rem}
For $b = \sigma_{i_1}^{\varepsilon_1} \ldots \sigma_{i_p}^{\varepsilon_p} \in B_n$, we define $\hat{b} := \sigma_{n-i_1}^{\varepsilon_1} \ldots \sigma_{n-i_p}^{\varepsilon_p}$. $\hat{b}$ is braid $b$ "looked at from the other side". That way we have elementary properties : $closure(b) = closure(\hat{b})$ ; $\hat{\sigma_k} = \sigma_{n-k}$ ; for any $\sigma$, $\tau$ $\in B_n$ : $\hat{\sigma \tau} = \hat{\sigma} \hat{\tau}$.
\end{rem}

We can use $I_n$ to express $LG^{3,1}$ differently.

\begin{align*}
LG^{3,1}(L;t_0,t_0^{-1}) &= \frac{1}{8} ~ trace( (id_W \otimes \mu^{\otimes n-1}) \circ b_S^n(b)) \\
&= \frac{1}{8} ~ trace(\underbrace{I_n \circ (id_W \otimes \mu^{\otimes n-1}) \circ I_n^{-1}}_{\tilde{\mu}} \circ  \Phi_n(\hat{b})) .
\end{align*}

Denoting as we already did several times  $I_n(e_{i_n} \otimes \ldots \otimes e_{i_1}) = f_{i_1} \wedge \ldots \wedge h_{k_q}$, we can compute $\tilde{\mu}$ :

\begin{align*}
\tilde{\mu}(f_{i_1} \wedge \ldots \wedge h_{k_q}) &= I_n \circ (id_W \otimes \mu^{\otimes n-1})(e_{i_n} \otimes \ldots \otimes e_{i_1}) \\
&= t_0^{3(n-1)/2} (-1)^{ \# \{k \in \{ 1, \ldots , n-1 \} | i_k \in \{ 2,3,4,8 \} \}} I_n(e_{i_n} \otimes \ldots \otimes e_{i_1}) \\
&= t_0^{3(n-1)/2} (-1)^{ \# \{k \in \{ 1, \ldots , n-1 \} | \text{ an odd number of the following appear : } \{ f_k,g_k,h_k\} \}} \\
& \ \ \ \ f_{i_1} \wedge \ldots \wedge h_{k_q} \\
&= t_0^{3(n-1)/2} (-1)^{ \# \{k \in \{ 1, \ldots , n-1 \} | f_k \text{ appears}  \}} (-1)^{ \# \{k \in \{ 1, \ldots , n-1 \} | g_k \text{ appears}  \}} \\
& \ \ \ \ (-1)^{ \# \{k \in \{ 1, \ldots , n-1 \} | h_k \text{ appears}  \}} f_{i_1} \wedge \ldots \wedge h_{k_q} .
\end{align*}
\\
So 
\begin{align*}
LG^{3,1}(L;t_0,t_0^{-1}) &= \frac{1}{8} ~ trace(\tilde{\mu} \circ \Phi_n(\hat{b})) \\
&= \frac{1}{8} \sum_{\begin{array}{c} 
\scriptstyle 1 \leq i_1 < \ldots < i_p \leq n \\
\scriptstyle 1 \leq j_1 < \ldots < j_m \leq n \\
\scriptstyle 1 \leq k_1 < \ldots < k_q \leq n
\end{array}} (f_{i_1} \wedge \ldots \wedge h_{k_q})^{*}(\tilde{\mu} \circ \Phi_n(\hat{b})(f_{i_1} \wedge \ldots \wedge h_{k_q})) 
\end{align*}
But
\begin{align*}
&(f_{i_1} \wedge \ldots \wedge h_{k_q})^{*}(\tilde{\mu} \circ \Phi_n(\hat{b})(f_{i_1} \wedge \ldots \wedge h_{k_q})) \\
&= t_0^{3(n-1)/2} (-1)^{ \# \{k \in \{ 1, \ldots , n-1 \} | f_k \text{ appears}  \}} (-1)^{ \# \{k \in \{ 1, \ldots , n-1 \} | g_k \text{ appears}  \}} \\
& \ \ \ \ (-1)^{ \# \{k \in \{ 1, \ldots , n-1 \} | h_k \text{ appears}  \}} (f_{i_1} \wedge \ldots \wedge h_{k_q})^{*}(\Phi_n(\hat{b})(f_{i_1} \wedge \ldots \wedge h_{k_q})) \\
&= t_0^{3(n-1)/2} (-1)^{\ldots} (f_{i_1} \wedge \ldots \wedge f_{i_p})^{*}(\bigwedge F(\hat{b})(f_{i_1} \wedge \ldots \wedge f_{i_p})) \\
& \ \ \ \ (g_{j_1} \wedge \ldots \wedge g_{j_m})^{*}(\bigwedge G(\hat{b})(g_{j_1} \wedge \ldots \wedge g_{j_m})) ~ (h_{k_1} \wedge \ldots \wedge h_{k_q})^{*}(\bigwedge H(\hat{b})(h_{k_1} \wedge \ldots \wedge h_{k_q}))
\end{align*}
So finally 
\begin{align*}
LG^{3,1}(L;t_0,t_0^{-1})&= t_0^{3(n-1)/2} \Bigg( \frac{1}{2} \sum_{\begin{array}{c} 
\scriptstyle 1 \leq i_1 < \ldots < i_p \leq n \\
\end{array}} (-1)^{ \# \{k \in \{ 1, \ldots , n-1 \} | f_k \text{ appears}  \}} \\
& \ \ \ \ \ \ \ \ (f_{i_1} \wedge \ldots \wedge f_{i_p})^{*}(\bigwedge F(\hat{b})(f_{i_1} \wedge \ldots \wedge f_{i_p})) \Bigg) * \dots
\end{align*}

The only thing that remains to be shown is that each of the three terms in the product is equal to $\Delta_{\hat{b}}(t) = \Delta_b(t)$. The proof is similar to the one we did for $LG^{2,1}$. The main point is to find R-matrices associated to their representations on $V^{\otimes n}$ such that 
$$
\Delta_L(t) = \frac{1}{2}~trace((id_{V} \otimes h^{\otimes n-1}) \circ \Psi_{V^{\otimes{n}}}(b))
$$ 
and that up to conjugation the trace is one of the three sums. We will not detail this argument.

\subsection{A remark around case $(n,1)$}

To prove the identity
$$
LG^{n,1}(L;t_0,t_0^{-1}) \overset{\bullet}{=} \Delta_L(t_0)^n
$$
when $n = 2$, $3$, we have used the crucial fact that we know an explicit formula for the R-matrix and the left handle (the maps we called $\mu$) in these two cases. Solving the conjecture for any $n$ using the same ideas  therefore requires the R-matrix to be computed in all cases. In \cite{DeWit}, the calculations are explicit up to $n=4$. However, we believe it is possible, with a proper amount of sweat and will, to give a formula for any $n$. Indeed, $U_q(gl(n|1))$ is a quantum super-algebra and one can find expressions for universal R-matrices in that context in \cite{KhTo} or \cite{Yam}. More recently, M.D. Gould, P.S. Isaac and J.L. Werry wrote the representation that derives the R-matrix from it's universal counterpart in a practical basis \cite{Gou}. This allows to project the universal R-matrix to find the corresponding map.

\section{Appendix : proof of proposition \ref{commutation}} \label{appendix}

Here we prove the result we stated in proposition \ref{commutation}. That is : for any $b\in B_n$,
$$
\Psi_n(b) \circ I_n = I_n \circ b_R^n(b).
$$

\begin{proof}

We show the commutation by induction on $n$, the number of strands in the braid group we consider. Note that it has already been verified when $n = 1$, $2$. Let us now suppose the equality holds for $n-1$, $n \geqslant 3$. We only need to prove the result for $b = \sigma_k$, $k = 1 , \ldots , n-1$. \\
\\
\underline{For $\sigma_k$, $k \leq n-2$} : \\
\\
$I_n(b_R^n(\sigma_k)(e_{i_1} \otimes \ldots \otimes e_{i_n})) \\
= I_n(e_{i_1} \otimes \ldots \otimes R(e_{i_k} \otimes e_{i_{k+1}}) \otimes \ldots \otimes e_{i_n}) \\
= I_n(b_R^{n-1}(\sigma_k)(e_{i_1} \otimes \ldots \otimes e_{i_{n-1}}) \otimes e_{i_n}) \\
\\
= \left\{
    \begin{array}{ll}
        I_{n-1}(b_R^{n-1}(\sigma_k)(e_{i_1} \otimes \ldots \otimes e_{i_{n-1}})) \wedge g_n & \mbox{if } i_n = 1 \\
        I_{n-1}(b_R^{n-1}(\sigma_k)(e_{i_1} \otimes \ldots \otimes e_{i_{n-1}})) & \mbox{if } i_n = 2 \\
        Reord(I_{n-1}(b_R^{n-1}(\sigma_k)(e_{i_1} \otimes \ldots \otimes e_{i_{n-1}})) \wedge f_n \wedge g_n ) & \mbox{if } i_n = 3 \\
        Reord(I_{n-1}(b_R^{n-1}(\sigma_k)(e_{i_1} \otimes \ldots \otimes e_{i_{n-1}})) \wedge f_n ) & \mbox{if } i_n = 4
    \end{array}
\right. \\
= \left\{
    \begin{array}{ll}
        \Psi_{n-1}(\sigma_k)(I_{n-1}(e_{i_1} \otimes \ldots \otimes e_{i_{n-1}})) \wedge g_n & \mbox{if } i_n = 1 \\
        \Psi_{n-1}(\sigma_k)(I_{n-1}(e_{i_1} \otimes \ldots \otimes e_{i_{n-1}})) & \mbox{if } i_n = 2 \\
        Reord(I_{n-1}(b_R^{n-1}(\sigma_k)(e_{i_1} \otimes \ldots \otimes e_{i_{n-1}})) \wedge f_n \wedge g_n ) & \mbox{if } i_n = 3 \\
        Reord(I_{n-1}(b_R^{n-1}(\sigma_k)(e_{i_1} \otimes \ldots \otimes e_{i_{n-1}})) \wedge f_n ) & \mbox{if } i_n = 4
    \end{array}
\right. \text{ (inductive hypothesis)} \\
\\
$
\\
On the other hand :\\
$
\\
\Psi_{n}(\sigma_k)(I_{n}(e_{i_1} \otimes \ldots \otimes e_{i_n})) \\
= \left\{
    \begin{array}{ll}
        \Psi_{n}(\sigma_k)(I_{n-1}(e_{i_1} \otimes \ldots \otimes e_{i_{n-1}}) \wedge g_n) & \mbox{if } i_n = 1 \\
        \Psi_{n}(\sigma_k)(I_{n-1}(e_{i_1} \otimes \ldots \otimes e_{i_{n-1}})) & \mbox{if } i_n = 2 \\
        \Psi_{n}(\sigma_k)(Reord(I_{n-1}(e_{i_1} \otimes \ldots \otimes e_{i_{n-1}}) \wedge f_n \wedge g_n)) & \mbox{if } i_n = 3 \\
        \Psi_{n}(\sigma_k)(Reord(I_{n-1}(e_{i_1} \otimes \ldots \otimes e_{i_{n-1}}) \wedge f_n)) & \mbox{if } i_n = 4
    \end{array}
\right. \\
\\
\\
= \left\{
    \begin{array}{lll}
        \Psi_{n}(\sigma_k)(I_{n-1}(e_{i_1} \otimes \ldots \otimes e_{i_{n-1}})) \wedge \Psi_{n}(\sigma_k)(g_n) & \mbox{if } i_n = 1 \\
        \Psi_{n}(\sigma_k)(I_{n-1}(e_{i_1} \otimes \ldots \otimes e_{i_{n-1}})) = \Psi_{n-1}(\sigma_k)(I_{n-1}(e_{i_1} \otimes \ldots \otimes e_{i_{n-1}})) & \mbox{if } i_n = 2 \\
        \Psi_{n}(\sigma_k)(Reord(I_{n-1}(e_{i_1} \otimes \ldots \otimes e_{i_{n-1}}) \wedge f_n \wedge g_n)) & \mbox{if } i_n = 3 \\
        \Psi_{n}(\sigma_k)(Reord(I_{n-1}(e_{i_1} \otimes \ldots \otimes e_{i_{n-1}}) \wedge f_n)) & \mbox{if } i_n = 4 
    \end{array}
\right. 
$ \\
\\
\\
For $i_n=1$, since $\Psi_{n}(\sigma_k)(g_n)= g_n$, we obtain the result. We also observe the equality holds when $i_n=2$. We now study the two remaining cases.\\
\\
Let $\mu(e_{i_1} \otimes \ldots \otimes e_{i_n})$ be the total of the number of $e_1$ and the number of $e_3$ in that elementary tensor. Given the expression of $I_n$ and our reference basis of $\bigwedge(W_n \oplus W_n)$, it is obvious that : 
$$
Reord(I_{n-1}(e_{i_1} \otimes \ldots \otimes e_{i_{n-1}}) \wedge f_n) = (-1)^{\mu(e_{i_1} \otimes \ldots \otimes e_{i_{n-1}})} I_{n-1}(e_{i_1} \otimes \ldots \otimes e_{i_{n-1}}) \wedge f_n $$
and
$$Reord(I_{n-1}(e_{i_1} \otimes \ldots \otimes e_{i_{n-1}}) \wedge f_n \wedge g_n) = (-1)^{\mu(e_{i_1} \otimes \ldots \otimes e_{i_{n-1}})} I_{n-1}(e_{i_1} \otimes \ldots \otimes e_{i_{n-1}}) \wedge f_n \wedge g_n .
$$\\
Therefore : \\
\\
$\Psi_{n}(\sigma_k)(Reord(I_{n-1}(e_{i_1} \otimes \ldots \otimes e_{i_{n-1}}) \wedge f_n \wedge g_n))\\ = (-1)^{\mu(e_{i_1} \otimes \ldots \otimes e_{i_{n-1}})} \Psi_{n}(\sigma_k)(I_{n-1}(e_{i_1} \otimes \ldots \otimes e_{i_{n-1}}) \wedge f_n \wedge g_n)\\ = (-1)^{\mu(e_{i_1} \otimes \ldots \otimes e_{i_{n-1}})} \Psi_{n-1}(\sigma_k)(I_{n-1}(e_{i_1} \otimes \ldots \otimes e_{i_{n-1}})) \wedge f_n \wedge g_n
$ \\
\\
Moreover, given the specific form of matrix $R$, every term that appears in $b_R^{n-1}(\sigma_k)(e_{i_1} \otimes \ldots \otimes e_{i_{n-1}})$ has the same total number of $e_1$ and $e_3$ as $e_{i_1} \otimes \ldots \otimes e_{i_{n-1}}$. Hence : \\
\\
$Reord(I_{n-1}(b_R^{n-1}(\sigma_k)(e_{i_1} \otimes \ldots \otimes e_{i_{n-1}})) \wedge f_n \wedge g_n )\\ = (-1)^{\mu(e_{i_1} \otimes \ldots \otimes e_{i_{n-1}})} I_{n-1} \circ b_R^{n-1}(\sigma_k)(e_{i_1} \otimes \ldots \otimes e_{i_{n-1}})\wedge f_n \wedge g_n \\ =  (-1)^{\mu(e_{i_1} \otimes \ldots \otimes e_{i_{n-1}})} \Psi_{n-1}(\sigma_k) \circ I_{n-1}(e_{i_1} \otimes \ldots \otimes e_{i_{n-1}})\wedge f_n \wedge g_n
$ \\
\\
Thus we obtain the identity in case $i_n = 3$. Similar calculations show it is also true when $i_n = 4$. The only remaining question is for the last generator of $B_n$.\\
\\
\underline{For $\sigma_{n-1}$} : We show that $\Psi_n(\sigma_{n-1}) \circ I_n (e_{i_1} \otimes \ldots \otimes e_{i_{n}}) = I_n \circ b_R^n(\sigma_{n-1})(e_{i_1} \otimes \ldots \otimes e_{i_{n}})$ for each of the 16 possible ordered pairs $(i_{n-1} , i_n)$ : \\
\\
\boxed{$(1,1)$} : \\
$\Psi_n(\sigma_{n-1})(I_{n}(e_{i_1} \otimes \ldots \otimes e_1 \otimes e_1)) \\ = \Psi_n(\sigma_{n-1})(I_{n-2}(e_{i_1} \otimes \ldots \otimes e_{i_{n-2}}) \wedge g_{n-1} \wedge g_n ) \\
= I_{n-2}(e_{i_1} \otimes \ldots \otimes e_{i_{n-2}}) \wedge (-t_0^{1/2} g_n)\wedge (-t_0^{1/2} g_{n-1} + (1-t_0) g_n) \\
= -t_0 I_{n-2}(e_{i_1} \otimes \ldots \otimes e_{i_{n-2}}) \wedge g_{n-1} \wedge g_n \\
\\
I_n \circ (1 \otimes 1 \otimes \ldots \otimes R)(e_{i_1} \otimes \ldots \otimes e_1 \otimes e_1) \\ = I_n(e_{i_1} \otimes \ldots \otimes e_{i_{n-2}} \otimes -t_0 e_1 \otimes e_1) \\
= -t_0 I_{n-2}(e_{i_1} \otimes \ldots \otimes e_{i_{n-2}}) \wedge g_{n-1} \wedge g_n
$ \\
\\
Now that we have explicited one case, we give the results for the remaining ones. \\
\\
\boxed{$(4,4)$} :  $\Psi_n(\sigma_{n-1})(I_{n}(e_{i_1} \otimes \ldots \otimes e_4 \otimes e_4)) = I_n \circ (1 \otimes 1 \otimes \ldots \otimes R)(e_{i_1} \otimes \ldots \otimes e_4 \otimes e_4) \\ = -t_0^{-1} I_{n-2}(e_{i_1} \otimes \ldots \otimes e_{i_{n-2}}) \wedge f_{n-1} \wedge f_n $\\
\\
\boxed{$(2,2)$} :  $\Psi_n(\sigma_{n-1})(I_{n}(e_{i_1} \otimes \ldots \otimes e_2 \otimes e_2)) = I_n \circ (1 \otimes 1 \otimes \ldots \otimes R)(e_{i_1} \otimes \ldots \otimes e_2 \otimes e_2) \\= I_{n-2}(e_{i_1} \otimes \ldots \otimes e_{i_{n-2}})
$ \\
\\
\boxed{$(3,3)$} :  $\Psi_n(\sigma_{n-1})(I_{n}(e_{i_1} \otimes \ldots \otimes e_3 \otimes e_3)) = I_n \circ (1 \otimes 1 \otimes \ldots \otimes R)(e_{i_1} \otimes \ldots \otimes e_3 \otimes e_3) \\ = I_{n-2}(e_{i_1} \otimes \ldots \otimes e_{i_{n-2}}) \wedge f_{n-1} \wedge f_n \wedge g_{n-1} \wedge g_n
$ \\
\\
\boxed{$(1,2)$} :  $\Psi_n(\sigma_{n-1})(I_{n}(e_{i_1} \otimes \ldots \otimes e_1 \otimes e_2)) = I_n \circ (1 \otimes 1 \otimes \ldots \otimes R)(e_{i_1} \otimes \ldots \otimes e_1 \otimes e_2) \\ = -t_0^{1/2} I_{n-2}(e_{i_1} \otimes \ldots \otimes e_{i_{n-2}}) \wedge g_n
$ \\
\\
\boxed{$(2,1)$} :  $\Psi_n(\sigma_{n-1})(I_{n}(e_{i_1} \otimes \ldots \otimes e_2 \otimes e_1)) = I_n \circ (1 \otimes 1 \otimes \ldots \otimes R)(e_{i_1} \otimes \ldots \otimes e_2 \otimes e_1) \\ = I_{n-2}(e_{i_1} \otimes \ldots \otimes e_{i_{n-2}}) \wedge (-t_0^{1/2} g_{n-1} + (1-t_0) g_n)
$
\\
\\
\boxed{$(1,3)$} :  $\Psi_n(\sigma_{n-1})(I_{n}(e_{i_1} \otimes \ldots \otimes e_1 \otimes e_3)) = I_n \circ (1 \otimes 1 \otimes \ldots \otimes R)(e_{i_1} \otimes \ldots \otimes e_1 \otimes e_3) \\ = (-1)^{\mu(e_{i_1} \otimes \ldots \otimes e_{i_{n-2}} \otimes e_1)} t_0^{1/2} I_{n-2}(e_{i_1} \otimes \ldots \otimes e_{i_{n-2}}) \wedge f_{n-1} \wedge g_{n-1} \wedge g_n
$
\\
\\\boxed{$(3,1)$} :  $\Psi_n(\sigma_{n-1})(I_{n}(e_{i_1} \otimes \ldots \otimes e_3 \otimes e_1)) = I_n \circ (1 \otimes 1 \otimes \ldots \otimes R)(e_{i_1} \otimes \ldots \otimes e_3 \otimes e_1) \\ = (-1)^{\mu(e_{i_1} \otimes \ldots \otimes e_{i_{n-2}})} I_{n-2}(e_{i_1} \otimes \ldots \otimes e_{i_{n-2}}) \wedge ((1-t_0) f_{n-1} \wedge g_{n-1} \wedge g_n -t_0^{1/2} f_{n} \wedge g_{n-1} \wedge g_n)
$
\\
\\\boxed{$(3,4)$} :  $\Psi_n(\sigma_{n-1})(I_{n}(e_{i_1} \otimes \ldots \otimes e_3 \otimes e_4)) = I_n \circ (1 \otimes 1 \otimes \ldots \otimes R)(e_{i_1} \otimes \ldots \otimes e_3 \otimes e_4) \\ = t_0^{-1/2} I_{n-2}(e_{i_1} \otimes \ldots \otimes e_{i_{n-2}}) \wedge f_{n-1} \wedge f_n \wedge g_n
$
\\
\\\boxed{$(4,3)$} :  $\Psi_n(\sigma_{n-1})(I_{n}(e_{i_1} \otimes \ldots \otimes e_4 \otimes e_3)) = I_n \circ (1 \otimes 1 \otimes \ldots \otimes R)(e_{i_1} \otimes \ldots \otimes e_4 \otimes e_3) \\ = I_{n-2}(e_{i_1} \otimes \ldots \otimes e_{i_{n-2}}) \wedge (t_0^{-1/2} f_{n-1} \wedge f_n \wedge g_{n-1} + (1-t_0^{-1}) f_{n-1} \wedge f_n \wedge g_n)
$
\\
\\\boxed{$(2,4)$} :  $\Psi_n(\sigma_{n-1})(I_{n}(e_{i_1} \otimes \ldots \otimes e_2 \otimes e_4)) = I_n \circ (1 \otimes 1 \otimes \ldots \otimes R)(e_{i_1} \otimes \ldots \otimes e_2 \otimes e_4) \\ = t_0^{-1/2} (-1)^{\mu(e_{i_1} \otimes \ldots \otimes e_{i_{n-2}})} I_{n-2}(e_{i_1} \otimes \ldots \otimes e_{i_{n-2}}) \wedge f_{n-1}
$
\\
\\\boxed{$(4,2)$} :  $\Psi_n(\sigma_{n-1})(I_{n}(e_{i_1} \otimes \ldots \otimes e_4 \otimes e_2)) = I_n \circ (1 \otimes 1 \otimes \ldots \otimes R)(e_{i_1} \otimes \ldots \otimes e_4 \otimes e_2) \\ = (-1)^{\mu(e_{i_1} \otimes \ldots \otimes e_{i_{n-2}})} I_{n-2}(e_{i_1} \otimes \ldots \otimes e_{i_{n-2}}) \wedge ((1- t_0^{-1}) f_{n-1} + t_0^{-1/2} f_n)
$
\\
\\\boxed{$(1,4)$} :  $\Psi_n(\sigma_{n-1})(I_{n}(e_{i_1} \otimes \ldots \otimes e_1 \otimes e_4)) = I_n \circ (1 \otimes 1 \otimes \ldots \otimes R)(e_{i_1} \otimes \ldots \otimes e_1 \otimes e_4) \\ = - (-1)^{\mu(e_{i_1} \otimes \ldots \otimes e_{i_{n-2}})} I_{n-2}(e_{i_1} \otimes \ldots \otimes e_{i_{n-2}}) \wedge f_{n-1} \wedge g_n
$
\\
\\\boxed{$(2,3)$} :  $\Psi_n(\sigma_{n-1})(I_{n}(e_{i_1} \otimes \ldots \otimes e_2 \otimes e_3)) = I_n \circ (1 \otimes 1 \otimes \ldots \otimes R)(e_{i_1} \otimes \ldots \otimes e_2 \otimes e_3) \\ = (-1)^{\mu(e_{i_1} \otimes \ldots \otimes e_{i_{n-2}})} I_{n-2}(e_{i_1} \otimes \ldots \otimes e_{i_{n-2}}) \wedge (- f_{n-1} \wedge g_{n-1} - Y f_{n-1} \wedge g_n)
$
\\
\\\boxed{$(3,2)$} :  $\Psi_n(\sigma_{n-1})(I_{n}(e_{i_1} \otimes \ldots \otimes e_3 \otimes e_2)) = I_n \circ (1 \otimes 1 \otimes \ldots \otimes R)(e_{i_1} \otimes \ldots \otimes e_3 \otimes e_2) \\ = (-1)^{\mu(e_{i_1} \otimes \ldots \otimes e_{i_{n-2}})} I_{n-2}(e_{i_1} \otimes \ldots \otimes e_{i_{n-2}}) \wedge (-Y f_{n-1} \wedge g_n - f_n \wedge g_n)
$
\\
\\\boxed{$(4,1)$} :  $\Psi_n(\sigma_{n-1})(I_{n}(e_{i_1} \otimes \ldots \otimes e_4 \otimes e_1)) = I_n \circ (1 \otimes 1 \otimes \ldots \otimes R)(e_{i_1} \otimes \ldots \otimes e_4 \otimes e_1) \\ = (-1)^{\mu(e_{i_1} \otimes \ldots \otimes e_{i_{n-2}})} I_{n-2}(e_{i_1} \otimes \ldots \otimes e_{i_{n-2}}) \wedge (-Y f_{n-1} \wedge g_{n-1} - f_n \wedge g_{n-1} - Y^2 f_{n-1} \wedge g_n -Y f_n \wedge g_n)
$ \\
\\
Which ends the proof.
\end{proof}
\nopagebreak
  \vskip.5cm
  \par\noindent \textsc{Acknowledgments.}
I would like to thank my thesis advisor Emmanuel Wagner very warmly for his kind help and his sound advice, and for pointing me to the right direction in times of doubt.
\vskip.5cm

\begin{thebibliography}{10}

\bibitem{DWiIsLi}
D. De Wit, A. Ishii, J. Links.
\newblock {\em Infinitely many two-variable generalisations of the Alexander-Conway polynomial}.
\newblock {Algebraic and Geometric Topology}, Volume 5, 405-418,
  2005.

\bibitem{DeWKauLin}
D. De Wit, L.H. Kauffman, J. Links.
\newblock {\em On the Links-Gould invariant of links}.
\newblock {J. Knot Theory Ramifications}. 8 (1999), no.2, 165-199. 
  MR 2000j:57020  

\bibitem{DeWit}
D. De Wit.
\newblock {\em An infinite suite of Links-Gould invariants}.
\newblock {J. Knot Theory Ramifications}. 10 (2001), no.1, 37-62. 
  MR 2002e:57015

\bibitem{ToFo}
R.H. Fox, G. Torres.
\newblock {\em Dual presentations of the group of a knot}.
\newblock {The Annals of Mathematics}. Second Series, vol.59, no.2 (Mar., 1954), 211-218. 

\bibitem{GPM}
N. Geer, B. Patureau-Mirand.
\newblock {\em Multivariable link invariants arising from $sl(2|1)$ and the Alexander polynomial}.
\newblock {Journal of Pure and Applied Algebra}, Volume 210, Issue 1, July 2007, 283-298.


\bibitem{Gou}
M.D. Gould, P.S. Isaac, J.L. Werry.
\newblock {\em Matrix elements for type 1 unitary irreducible representations of the Lie superalgebra $gl(m|n)$}.
\newblock {Journal of Mathematical Physics}, 55 011703: 011703.1-011703.32. doi:10.1063/1.4861706, 2014.


\bibitem{Ish}
A. Ishii.
\newblock {\em The Links-Gould polynomial as a generalization of the Alexander-Conway polynomial}.
\newblock {Pacific Journal of Mathematics}, Vol. 225, No. 2, 2006.


\bibitem{KhTo}
S.M. Khoroshkin, V.N. Tolstoy.
\newblock {\em Universal R-matrix for Quantized (Super)Algebras}.
\newblock {Communications in Mathematical Physics}. no.141 (1991), 599-617.

\bibitem{Lick}
W.B.R. Lickorish.
\newblock {\em An Introduction to Knot Theory}.
\newblock {Graduate Texts in Mathematics, Vol. 175}.
\newblock {Springer}, 1997. 

\bibitem{LinGou}
J. Links, M.D. Gould.
\newblock {\em Two variable link polynomials from quantum supergroups}.
\newblock {Letters in Mathematical Physics}, 26(3):187-198,
  November 1992.
  
\bibitem{MarWag}
I. Marin, E. Wagner.
\newblock {\em A cubic defining algebra for the Links-Gould polynomial}.
\newblock {Advances in Math.}, 248 (2013), 1332-1365. 

\bibitem{Oht}
T. Ohtsuki.
\newblock {\em Quantum invariants. A study of knots, 3-manifolds, and their sets}.
\newblock {Number 29 in Series on Knots and Everything}.
\newblock {World Scientific}, 2002.

\bibitem{Resh}
N. Reshetikhin, C. Stroppel, B. Webster.
\newblock {\em Schur-Weyl-Type duality for quantized $gl(1|1)$, the Burau representation of braid groups, and invariants of tangled graphs}.
\newblock {I. Itenberg et al. (eds.), Perspectives in Analysis, Geometry, and Topology: On the Occasion of the 60th Birthday of Oleg Viro}, Progress in Mathematics 296.

\bibitem{Viro}
O. Viro.
\newblock {\em Quantum relatives of the Alexander polynomial}.
\newblock {St. Petersburg Math. J.}, Vol. 18 (2007), no.3, 391-457.

\bibitem{Yam}
H. Yamane.
\newblock {\em Quantized Enveloping Algebras Associated with Simple Lie Superalgebras and Their Universal R-matrices}.
\newblock {Publ. RIMS, Kyoto University}, vol.30 (1994), 15-87.



\end{thebibliography}
\end{document}